\renewcommand{\subjclass}[1]{\thanks{\emph{2010 Mathematics Subject Classification:}~#1}}
\renewcommand{\keywords}[1]{\thanks{\emph{Keywords and Phrases:}~#1}}
\renewcommand{\date}{\thanks{\today}}
\newtheorem{theorem}{Theorem}[section]
\newtheorem{lemma}{Lemma}[section]
\newtheorem{proposition}[lemma]{Proposition}
\numberwithin{equation}{section}
\def\teto#1{\setbox\z@\hbox{${#1\vphantom k}$}\hbox{%
 \hbox{\lower2\ex@\hbox{\lower\dp\z@\hbox{\vbox{\hrule
 \hbox{\vrule\hskip2\ex@\vbox{\vskip2\ex@\box\z@\vskip1\ex@}%
 \hskip2\ex@\vrule}}}}}}}
\newcommand{\Q}{\mathbb{Q}}
\newcommand{\Z}{\mathbb{Z}}
\newcommand{\NN}{\mathbb{N}}
\newcommand{\Cc}{\mathbb{C}}
\newcommand{\Rr}{\mathbb{R}}
\newcommand{\Qq}{\mathbb{Q}}
\newcommand{\OQq}{\overline{\mathbb{Q}}}
\newcommand{\Zz}{\mathbb{Z}}
\newcommand{\m}{\mathbf{m}}
\newcommand{\cP}{\mathcal{P}}
\newcommand{\ve}{\varepsilon}
\newcommand{\al}{\alpha}
\newcommand{\ga}{\gamma}
\newcommand{\fp}{\frak{p}}
\newcommand{\fa}{\frak{a}}
\newcommand{\fd}{\frak{d}}
\newcommand{\fA}{\frak{A}}
\newcommand{\fB}{\frak{B}}
\newcommand{\fC}{\frak{C}}
\newcommand{\fD}{\frak{D}}
\newcommand{\fP}{\frak{P}}
\newcommand{\ketsor}[2]{\genfrac{}{}{0pt}{2}{#1}{#2}}
\newcommand{\half}{\mbox{$\frac{1}{2}$}}
\DeclareMathOperator{\ord}{ord}
\DeclareMathOperator{\rank}{rank}
\DeclareMathOperator{\lcm}{lcm}
\newcommand{\kdots}{,\ldots ,}
\renewcommand{\eqref}[1]{(\ref{#1})}
\title[Hyper- and superelliptic equations]{Effective results for hyper- and superelliptic equations over number fields}
\subjclass{11D41,11D61,11J86}
\keywords{hyperelliptic equations, superelliptic equations, Schinzel-Tijdeman theorem, Baker's method}
\author[A. B\'erczes]{Attila B\'erczes}
\thanks{The research was supported in part by the Hungarian Academy of
Sciences, and by grants K100339 (A.B., K.G.)
and K75566 (A.B.) of the Hungarian National Foundation for
Scientific Research. The work is supported
by the T\'AMOP 4.2.1./B-09/1/KONV-2010-0007 project. The project
is implemented through the New Hungary Development Plan,
co-financed by the European Social Fund and the European Regional
Development Fund.}
\address{A. B\'erczes \newline
         \indent Institute of Mathematics, University of Debrecen \newline
         \indent H-4010 Debrecen, P.O. Box 12, Hungary}
\email{berczesa\char'100science.unideb.hu}
\author[J.-H. Evertse]{Jan-Hendrik Evertse}
\address{J.-H. Evertse \newline
         \indent Universiteit Leiden, Mathematisch Instituut, \newline
         \indent Postbus 9512, 2300 RA Leiden, The Netherlands}
\email{evertse\char'100math.leidenuniv.nl}
\author[K. Gy\H{o}ry]{K\'{a}lm\'{a}n Gy\H{o}ry}
\address{K. Gy\H{o}ry \newline
         \indent Institute of Mathematics, University of Debrecen \newline
         \indent H-4010 Debrecen, P.O. Box 12, Hungary}
\email{gyory\char'100science.unideb.hu}
\begin{document}
\maketitle

\centerline{\it "To the memory of Professor Antal Bege"}

\begin{abstract}
Let $f$ be a polynomial with coefficients in the ring $O_S$ of $S$-integers of a given
number field $K$, $b$ a non-zero $S$-integer, and $m$ an integer $\geq 2$.
Suppose that $f$ has no multiple zeros.
We consider the equation (*) $by^m=f(x)$ in $x,y\in O_S$.
In the present paper we give explicit upper bounds
in terms of $K,S,b,f,m$ for the heights of the solutions of (*).
Further, we give an explicit bound $C$ in terms of $K,S,b,f$
such that if $m>C$ then (*) has only solutions with $y=0$ or a root of unity.
Our results are more detailed versions of work of
Trelina, Brindza, and Shorey and Tijdeman.
The results in the present paper are needed in a forthcoming paper of ours
on Diophantine equations over integral domains which are finitely
generated over $\Zz$.
\end{abstract}

\maketitle

\section{Introduction}\label{introduction}

Let $f\in\Zz [X]$ be a polynomial of degree $n$ without multiple roots
and $m$ an integer $\geq 2$. Siegel proved that the equation
\begin{equation}\label{1.1}
y^m=f(x)
\end{equation}
has only finitely many solutions in $x,y\in\Zz$
if $m=2,n\geq 3$ \cite{Siegel26} and if $m\geq 3,n\geq 2$
\cite{Siegel29}. Siegel's proof is ineffective. In 1969, Baker \cite{Baker69}
gave an effective proof of Siegel's result. More precisely, he showed that
if $(x,y)$ is a solution of \eqref{1.1}, then
\[
\max(|x|,|y|)\leq\left\{
\begin{array}{ll}
\exp\exp\left\{ (5m)^{10}(n^{10n}H)^{n^2}\right\}&\mbox{if } m\geq 3,\, n\geq 2,
\\
\exp\exp\exp\left\{(10^{10n}H)^2\right\}&\mbox{if } m=2,\, n\geq 3,
\end{array}\right.
\]
where $H$ is the maximum of the absolute values of the coefficients of $f$.
In 1976, Schinzel and Tijdeman \cite{SchinzelTijdeman76} proved that there is
an effectively computable number $C$, depending only on $f$, such that \eqref{1.1}
has no solutions $x,y\in\Zz$ with $y\not= 0,\pm 1$ if $m>C$.
The proofs of Baker and of Schinzel and Tijdeman
are both based on Baker's results on linear forms in logarithms of
algebraic numbers.

First Trelina \cite{Trelina78} and later in a more general form Brindza \cite{B6}
generalized the results of Baker to equations of the type \eqref{1.1} where
the coefficients of $f$ belong to the ring of $S$-integers $O_S$
of a number field $K$
for some finite set of places $S$, and where the unknowns $x,y$
are taken from $O_S$. In their proof they used Baker's result on linear
forms in logarithms, as well as a $p$-adic analogue of this.
In fact, Baker, Schinzel and Tijdeman, Trelina and Brindza considered \eqref{1.1}
also for polynomials $f$ which may have multiple roots.
Brindza gave an effective bound for the solutions in the most general situation
where \eqref{1.1} has only finitely many solutions. This was later improved by
Bilu \cite{Bilu1} and Bugeaud \cite{Bug2}.
Shorey and Tijdeman \cite[Theorem 10.2]{ShoreyTijdeman86} extended the theorem
of Schinzel and Tijdeman to equation \eqref{1.1} over the $S$-integers of a number field.
For further related results and applications we refer to \cite{ShT}, \cite{Bilu1}, \cite{Bug2},
\cite{GyPA2} and the references given there.

In a forthcoming paper, we will prove effective analogues of the theorems
of Baker and Schinzel and Tijdeman for equations of the type \eqref{1.1}
where the unknowns $x,y$ are taken from an arbitrary finitely generated domain
over $\Zz$. For this, we need effective finiteness results for Eq. \eqref{1.1}
over the ring of $S$-integers of a number field which are more precise than the
results of Trelina, Brindza, Bilu, Bugeaud and Shorey and Tijdeman mentioned above.
In the present paper, we derive such precise results.
Here, we follow improved, updated versions of standard methods.
For technical convenience, we restrict ourselves
to the case that the polynomial $f$ has no multiple roots.
We mention that recently, Gallegos-Ruiz \cite{Gallegos-Ruiz11} obtained an explicit
bound for the heights of the solutions of the hyperelliptic equation $y^2=f(x)$
in $S$-integers $x,y$ over $\Q$, but his result is not adapted to our purposes.

In Theorems \ref{T_super} and \ref{T_hyper} stated below we give for any fixed
exponent $m$ effective upper bounds for the heights of the solutions $x,y\in O_S$
of \eqref{1.1} which are fully explicit in terms of $m$, the degree and height of $f$, the degree and
discriminant of $K$ and the prime ideals in $S$.
In Theorem \ref{T_ST} below we generalize the Schinzel-Tijdeman Theorem to the effect
that if \eqref{1.1} has a solution $x,y\in O_S$ with $y$ not equal to $0$
or to a root of unity, then $m$ is bounded above by an explicitly given bound
depending only on $n$, the height of $f$, the degree and
discriminant of $K$ and the prime ideals in $S$.

\section{Results}\label{S_Int}

We start with some notation.
Let $K$ be a number field. We denote by $d, D_K$ the degree and discriminant of $K$,
by $O_K$ the ring of integers of $K$ and by $M_K$ the set of places of $K$.
The set $M_K$ consists of
real infinite places, these are
the embeddings $\sigma :\, K\hookrightarrow \Rr$; complex infinite places,
these are the pairs of conjugate complex embeddings
$\{ \sigma ,\overline{\sigma}:\, K\hookrightarrow\Cc\}$,
and finite places, these are the prime ideals of $O_K$.
We define normalized absolute values $|\cdot |_v$ ($v\in M_K$) as follows:
\begin{equation}\label{abs_val}
\left\{
\begin{array}{l}
|\cdot|_v=|\sigma(\cdot)|\quad \mbox{if $v=\sigma$ is real infinite;}
\\
|\cdot|_v=|\sigma(\cdot)|^2\quad \mbox{if $v=\{\sigma,\, \overline{\sigma}\}$
is complex infinite;}
\\
|\cdot|_v=(N_K\fp)^{-\ord_{\fp}(\cdot)} \quad \mbox{if $v=\fp$ is finite;}
\end{array}\right.
\end{equation}
here $N_K\fp=\#O_K/\fp$ is the norm of $\fp$ and
$\ord_{\fp}(x)$ denotes the exponent of $\fp$ in the prime ideal decomposition
of $x$, with $\ord_{\fp}(0)=\infty$.

The logarithmic height of $\al\in K$ is defined by
\[
h(\al):=\frac{1}{[K:\Qq ]}\log\prod_{v\in M_K}\max (1,|\al |_v).
\]

Let $S$ be a finite set of places of $K$ containing all
(real and complex) infinite places.
We denote by $O_S$ the ring of $S$ integers in $K$, i.e.
$$
O_S=\{ x\in K \ : \ |x|_v \leq 1 \text{ for } v\in M_K\setminus S \}.
$$
Let $s:=\# S$ and put
\begin{eqnarray*}
&&P_S=Q_S:=1\ \ \mbox{if $S$ consists only of infinite places,}
\\
&&P_S=\max_{i=1\kdots t} N_K\fp_i,\ \ Q_S:=\prod_{i=1}^t N_K\fp_i
\\
&&\qquad\qquad\mbox{if $\fp_1\kdots\fp_t$ are the prime ideals in $S$.}
\end{eqnarray*}

We are now ready to state our results. In what follows,
\begin{equation}\label{poly}
f(X)=a_0X^n+a_1X^{n-1}+\dots+a_n \in O_S[X]
\end{equation}
is a polynomial of degree $n\geq 2$ without multiple roots and $b$ is a non-zero
element of $O_S$.
Put
$$
\widehat{h}:=\frac{1}{d}\sum_{v \in M_K}
\log \max (1,|b|_v,|a_0|_v, \dots ,|a_n|_v).
$$

Our first result concerns the superelliptic equation
\begin{equation}\label{super}
f(x)=by^m \quad \quad \mbox{in $x,y\in O_S$.}
\end{equation}
with a fixed exponent $m\geq 3$.

\begin{theorem}\label{T_super}
Assume that $m\geq 3$, $n\geq 2$.
If $x,y\in O_S$ is a solution to the equation \eqref{super} then we have
\begin{equation}\label{2.1}
h(x),h(y)\leq (6ns)^{14m^3n^3s}|D_K|^{2m^2n^2}Q_S^{3m^2n^2}e^{8m^2n^3d\widehat{h}}.
\end{equation}
\end{theorem}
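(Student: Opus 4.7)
The plan is to reduce equation \eqref{super} to a two-term $S$-unit equation over an auxiliary extension of $K$ and then invoke an explicit effective bound for $S$-unit equations (of G\H{o}ry--Yu type), which should be at our disposal in an earlier section of the paper. The strategy is the classical one going back to Baker, Trelina and Brindza; the novelty here will be the precise quantitative bookkeeping.

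First I would pass to the splitting field $L=K(\al_1,\dots,\al_n)$ of $f$ over $K$ and enlarge $S$ to a set $T$ of places of $L$ containing all archimedean places, all places of $L$ above $S$, and all finite places dividing $a_0b\cdot\mathrm{disc}(f)$. The degree, discriminant, class number and regulator of $L$, as well as $\#T$ and the norms of primes in $T$, are all bounded explicitly in terms of $d$, $|D_K|$, $n$, $s$, $Q_S$ and $\widehat{h}$ via standard Minkowski-type estimates. Since $f$ has no multiple roots and all common divisors are absorbed in $T$, the ideal factorization $(by^m)=(a_0)\prod_{i=1}^n(x-\al_i)$ in $O_{L,T}$ has pairwise coprime factors on the right-hand side, whence each $(x-\al_i)$ is, up to $T$-units, the $m$-th power of a $T$-ideal. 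Using an effective choice of $m$-th power representatives in the $T$-class group, one obtains
\[
x-\al_i=\eta_i\ga_i^m\qquad(i=1,\dots,n),
\]
where each $\eta_i$ ranges over an explicit finite set of elements of $L^*$ of bounded height and $\ga_i\in L^*$.

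Next, for any pair $i\neq j$, the identity $\eta_i\ga_i^m-\eta_j\ga_j^m=\al_j-\al_i$ is a Thue-type equation of degree $m$. To convert it into a genuine $S$-unit equation I would adjoin $\eta_i^{1/m}$ and $\eta_j^{1/m}$ to $L$, obtaining a field $M$ of degree at most $m^2[L:K]d$ over $\Qq$, in which the left-hand side factors completely as a product of binomials $(\eta_i^{1/m}\ga_i - \zeta^k\eta_j^{1/m}\ga_j)$. Dividing through by $\al_j-\al_i$ and picking a suitable factor gives a two-term $T'$-unit equation $u+v=1$ in $M^*$, where $T'$ is the set of $M$-places above $T$ together with places dividing $\al_j-\al_i$. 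Applying the explicit effective bound for $S$-unit equations gives an upper bound for $h(u)$, hence for $h(\ga_i/\ga_j)$; combining with analogous relations for a further index (or with archimedean estimates for the size of $x-\al_i$ and the known height of $\eta_i$) one recovers an effective upper bound for $h(x)$, from which $h(y)$ follows via $my\cdot h(y)\le h(f(x))+h(b)$.

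The main obstacle is not the conceptual scheme but the precise bookkeeping required for the exponents in \eqref{2.1}. Specifically, one must (i) bound $[M:\Qq]$, $|D_M|$, the regulator $R_M$ and $\#T'$ in terms of $d,n,m,s,|D_K|,Q_S$; (ii) control the heights of the finite set of representatives $\eta_i$ in terms of class number, regulator and $\widehat{h}$; and (iii) apply the $S$-unit equation bound with careful tracking of parameters so that the leading factor $(6ns)^{14m^3n^3s}$ and the powers of $|D_K|$, $Q_S$ and $e^{d\widehat{h}}$ come out as stated. The cubic dependence on $m$ in the exponent ultimately reflects the combination of the degree of $M$ over $L$ (quadratic in $m$) with the effective linear-forms-in-logarithms bound underlying the $S$-unit equation theorem (linear in the number of unknowns, itself growing with $m$). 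Squeezing all constants down to the exact form displayed in \eqref{2.1} is the technical crux of the proof.
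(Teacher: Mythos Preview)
Your overall strategy is the classical one and broadly matches the paper's, but two concrete choices differ and the first of them would prevent you from reaching the stated bound.

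\emph{Field of definition.} You pass to the full splitting field $L=K(\al_1,\dots,\al_n)$ and later to $M$ with $[M:\Qq]\le m^2[L:K]d$; since $[L:K]$ can be as large as $n!$, the cardinality of your place-set $T'$ and the degree, discriminant and regulator of $M$ all pick up factorial dependence on $n$. Feeding this into any Baker-type $S$-unit bound yields an exponent of shape $m^3 n!\, s$ rather than $m^3 n^3 s$, so you cannot recover \eqref{2.1} with its polynomial-in-$n$ exponents. The paper avoids this by never using more than two roots: it establishes $x-\al_i=\ga_i\xi_i^m$ already over $L_i=K(\al_i)$ (Lemma~\ref{L_gen_ell}), then works over
\[
M=K(\al_1,\al_2,\sqrt[m]{\ga_1/\ga_2},\rho),\qquad [M:K]\le m^2 n(n-1),
\]
and proves a tailored discriminant bound for this Kummer extension (Lemma~\ref{L_discresult}), exploiting that $M/K(\al_1,\al_2)$ is unramified outside primes dividing $m\cdot a_0 b D(f)$ and the primes in $S$.

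\emph{Endgame.} Rather than factoring $\ga_1\xi_1^m-\ga_2\xi_2^m=\al_2-\al_1$ into $m$ binomials and extracting a two-term $S$-unit equation, the paper applies the Gy\H{o}ry--Yu bound for Thue equations (Proposition~\ref{P_Thue}) directly to this degree-$m$ binary form over $M$. This is a packaging difference more than a mathematical one (the Thue bound is itself proved via decomposable form / $S$-unit machinery), but it keeps the bookkeeping linear and makes the constants in \eqref{2.1} transparent. Your reduction to a single equation $u+v=1$ is a bit optimistic: from the factorization you get $m$ linear factors whose product is $\al_2-\al_1$, and one still needs a Siegel-type identity among three of them to produce an $S$-unit equation; this is precisely what the Thue/decomposable-form result encapsulates.

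In short, your plan is sound in outline, but to obtain the exponents as stated you must restrict to $K(\al_1,\al_2)$ rather than the full splitting field, and you may as well quote the Thue bound directly.
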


We now consider the hyperelliptic equation
\begin{equation}\label{hyper}
f(x)=by^2\quad\quad \mbox{in $x,y\in O_S$.}
\end{equation}

\begin{theorem}\label{T_hyper}
Assume that $n\geq 3$.
If $x,y\in O_S$ is a solution to the equation \eqref{hyper} then we have
\begin{equation}\label{2.2}
h(x), h(y) \leq (4ns)^{212n^4s}|D_K|^{8n^3}Q_S^{20n^3}e^{50n^4d\widehat{h}}.
\end{equation}
\end{theorem}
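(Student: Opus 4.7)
\smallskip
\noindent\textbf{Proof plan.}
The plan is to adapt the classical hyperelliptic descent to the $S$-integer setting, finally reducing everything to an effective bound for $S$-unit equations. First I would pass to the splitting field $L=K(\alpha_1,\dots,\alpha_n)$ of $f$ over $K$, where $\alpha_1,\dots,\alpha_n$ are the roots of $f$, and enlarge $S$ to the set $T$ of places of $L$ lying above places of $S$. Standard effective estimates bound $[L:K]$, $|D_L|$, the $T$-regulator, the $T$-class number, and each $h(\alpha_i)$ in terms of $n,d,|D_K|,Q_S,\widehat h$.

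Next, starting from $by^2=a_0\prod_i(x-\alpha_i)$, I would exploit that for $i\neq j$ the $T$-ideal $\gcd\bigl((x-\alpha_i),(x-\alpha_j)\bigr)$ divides $(\alpha_i-\alpha_j)$, and so has norm bounded in terms of the $T$-part of the discriminant of $f$. A direct ideal computation then yields for each $i$ a factorization
\[
(x-\alpha_i)O_T=\fa_i\fb_i^2
\]
with $N_L(\fa_i)$ explicitly bounded. Principalizing $\fb_i$ via a bound for the $T$-class number and absorbing squares of $T$-units, I obtain for $i=1,2,3$
\[
x-\alpha_i=\gamma_i\beta_i^2\qquad(\beta_i\in L^*),
\]
where $\gamma_i$ lies in a finite, explicit set of $T$-integers of bounded height.

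I would then substitute this into the Vandermonde-type identity
\[
(\alpha_2-\alpha_3)(x-\alpha_1)+(\alpha_3-\alpha_1)(x-\alpha_2)+(\alpha_1-\alpha_2)(x-\alpha_3)=0,
\]
obtaining a ternary quadratic relation $A_1\beta_1^2+A_2\beta_2^2+A_3\beta_3^2=0$ with explicit bounds on $h(A_i)$. Passing to the at most quadratic extension $M=L(\sqrt{-A_1A_2A_3^{-2}})$ in which the form factors into two linear factors, and dividing the identity through by one of them, produces an $S'$-unit equation $u+v=1$ in $M$, where $u,v$ lie in a finitely generated group controlled by a fundamental system of $T'$-units (with $T'$ the places of $M$ above $T$). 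An effective Baker-type bound for $S$-unit equations over a number field then yields an upper bound for $h(u)$, hence for $h(\beta_1/\beta_2)$, and finally for $h\bigl((x-\alpha_1)/(x-\alpha_2)\bigr)$. Combined with the bound on $h(\alpha_i)$ this bounds $h(x)$, and then $h(y)$ follows from $f(x)=by^2$.

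The main obstacle is the explicit bookkeeping through these reductions: one must carefully control $[M:\Qq]\leq 2n!\,d$, $|D_M|$ in terms of $|D_K|$ and the discriminant of $f$, the regulator and class number of $L$ and of $M$, the heights of the representatives $\gamma_i$ (depending on a fundamental system of $T$-units), and finally combine these with the effective $S$-unit equation bound so that the final exponents match the shape $(4ns)^{212n^4s}|D_K|^{8n^3}Q_S^{20n^3}e^{50n^4d\widehat{h}}$ asserted in \eqref{2.2}. The specific numerical exponents arise from the product of Minkowski-type bounds, effective estimates for regulators, and Baker's estimates for linear forms in logarithms; tracking them tightly enough to obtain a clean polynomial dependence on $n,s,d$ in the exponents — rather than, say, a factorial — is where the real labour of the proof lies.
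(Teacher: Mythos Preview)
Your overall strategy---descent to $x-\alpha_i=\gamma_i\beta_i^2$, the Siegel identity among three of the $x-\alpha_i$, and a final reduction to an $S$-unit equation---is sound and is the classical route. The paper does something closely related but packaged differently: after obtaining $x-\alpha_i=\gamma_i\xi_i^2$ it writes down the pair of Pell equations
\[
\gamma_1\xi_1^2-\gamma_2\xi_2^2=\alpha_2-\alpha_1,\qquad
\gamma_1\xi_1^2-\gamma_3\xi_3^2=\alpha_3-\alpha_1
\]
and invokes the Gy\H{o}ry--Yu decomposable form bound (Proposition~\ref{P_Pell}) as a black box, rather than unwinding all the way to an $S$-unit equation. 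Your approach is more hands-on; the paper's is shorter because the Baker/unit-equation work is hidden inside the cited result.

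There is, however, one concrete point where your plan as written would fail to deliver the stated bound. You pass to the full splitting field $L=K(\alpha_1,\dots,\alpha_n)$ and later note $[M:\Qq]\le 2n!\,d$. With degrees of size $n!$ feeding into the discriminant, regulator, and Baker estimates, the final exponents are forced to contain a factorial in $n$; you cannot reach the polynomial shape $(4ns)^{212n^4s}$ that way, and you acknowledge this tension without resolving it. The paper avoids this entirely by never going beyond $K(\alpha_1,\alpha_2,\alpha_3)$ together with the two square roots $\sqrt{\gamma_1/\gamma_2},\sqrt{\gamma_1/\gamma_3}$, so that $[M:K]\le 4n(n-1)(n-2)\le 4n^3$; all discriminant and regulator bounds then stay polynomial in $n$ (Lemma~\ref{L_disc_I} and Lemma~\ref{L_discresult}(ii)). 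Your Siegel identity uses only three roots anyway, so the fix is simply to adjoin only $\alpha_1,\alpha_2,\alpha_3$ rather than all of them; with that change your route should also reach a bound of the required shape.
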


Our last result is an
an explicit version of the Schinzel-Tijdeman theorem over the $S$-integers.

\begin{theorem}\label{T_ST}
Assume that \eqref{super} has a solution $x,y\in O_S$ where
$y$ is neither $0$ nor a root of unity. Then
\begin{equation}\label{bound_m}
m \leq (10n^2s)^{40ns}|D_K|^{6n}P_S^{n^2}e^{11nd\widehat{h}}.
\end{equation}
\end{theorem}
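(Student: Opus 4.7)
\textit{Plan.} Let $(x,y)\in O_S^2$ be a solution of \eqref{super} with $y$ neither zero nor a root of unity. My approach follows the classical Schinzel--Tijdeman strategy made effective: reduce \eqref{super} to a binomial $m$-th power equation and then bound $m$ by balancing an exponentially small upper bound for an associated linear form in logarithms against an explicit Baker--Matveev type lower bound in which $m$ enters only as $\log m$.

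\textit{Step 1 (Hyperelliptic descent).} Let $L=K(\al_1,\dots,\al_n)$ be a splitting field of $f$ over $K$ and let $T$ consist of the places of $L$ above $S$ together with those dividing $a_0 b\cdot\mathrm{disc}(f)$; standard estimates bound $[L:\Qq]$, $|D_L|$, and $\#T$ explicitly in terms of $n$, $d$, $D_K$, $Q_S$ and $\widehat h$. Since $f$ has no multiple roots, at every prime of $O_L$ outside $T$ at most one of the ideals $(x-\al_i)$ has positive order and that order is divisible by $m$. Combining this with the finiteness of the class group of $O_T$ and of $O_T^*/(O_T^*)^m$, one can write
$$x-\al_i=\gamma_i z_i^m,\qquad i=1,\dots,n,$$
where the $\gamma_i$ are drawn from a finite set of explicitly controlled cardinality and height. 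The identity $by^m=(a_0\prod_i\gamma_i)(\prod_i z_i)^m$, together with the hypothesis on $y$, forces at least one $z_i$ — let $z_1$ be the one of maximal height — not to be a root of unity.

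\textit{Step 2 (Associated linear form in logarithms).} For any $j\neq 1$, subtracting two instances of the descent relation yields
$$\gamma_1 z_1^m - \gamma_j z_j^m = \al_j-\al_1.$$
Because $z_1$ is not a root of unity there is a place $v$ of $L$ with $|z_1|_v>1$; dividing by $\gamma_1 z_1^m$ gives
$$\bigl|(\gamma_j/\gamma_1)(z_j/z_1)^m-1\bigr|_v\le C_0\,|z_1|_v^{-m}.$$
With the appropriate choice of the complex or $\fP$-adic logarithm, the linear form
$$\Lambda:=\log(\gamma_j/\gamma_1)+m\log(z_j/z_1)$$
then satisfies $\log|\Lambda|_v\le -m\log|z_1|_v+O(1)$. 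Applying Matveev's theorem in the archimedean case, or Yu's $p$-adic analogue in the non-archimedean case, produces the matching lower bound
$$\log|\Lambda|_v\ge -C_1\,h(z_j/z_1)\,\log m,$$
where $C_1$ is fully explicit, polynomial in $n,d,|D_K|,P_S$, and only exponential in $\widehat h$ through the controlled factor $h(\gamma_j/\gamma_1)$.

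\textit{Step 3 (Conclusion and main obstacle).} Choosing $v$ to maximize $|z_1|_v$ gives $\log|z_1|_v\ge [L:\Qq]\,h(z_1)/\#T$; moreover $h(z_j)\le h(z_1)$ by the choice of $z_1$, so $h(z_j/z_1)\le 2h(z_1)$. The $h(z_1)$ factors therefore cancel when the two bounds for $\log|\Lambda|_v$ are combined, leaving an inequality of the shape $m\le C_2\log m$, which yields the explicit bound \eqref{bound_m} upon solving for $m$. The degenerate case in which $z_j/z_1$ is a root of unity for every admissible $j\neq 1$ (so that the chosen $\Lambda$ vanishes) is handled separately: then the binomial relations force $z_1^m$ to lie in an explicitly bounded finite set, hence $h(z_1)=O(1/m)$, $h(x)=O(1)$, and the standard Dobrowolski--Voutier lower bound for non-torsion algebraic $y$ combined with $mh(y)=h(f(x))=O(1)$ bounds $m$ directly. \emph{The principal technical obstacle} is making every constant uniformly explicit through the passage to the splitting field $L$: one must bound $|D_L|$, the class number and $T$-regulator of $L$, and the heights of the $\gamma_i$ entirely in terms of the $K$-side invariants $d,D_K,P_S,Q_S,\widehat h$, and then combine these estimates with the Matveev/Yu lower bound so that the resulting exponents in $n$ and $s$ take the polynomial form asserted in \eqref{bound_m}.
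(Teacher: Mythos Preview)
Your descent in Step~1 hides a fatal dependence on $m$. When you write $x-\al_i=\gamma_i z_i^m$ by invoking the finiteness of $O_T^*/(O_T^*)^m$, the coset representatives you must allow for the unit part of $\gamma_i$ are products $\prod_j\eta_j^{b_j}$ of fundamental $T$-units with exponents $0\le b_j<m$; hence $h(\gamma_i)$ is only bounded by a quantity of order $m$ times the $T$-regulator data, not by something independent of $m$. Consequently the constant $C_1$ in your Baker lower bound $\log|\Lambda|_v\ge -C_1\,h(z_j/z_1)\log m$, which absorbs $h(\gamma_j/\gamma_1)$, is itself of size $\asymp m$, and after cancelling $h(z_1)$ you obtain only the vacuous $m\ll m\log m$. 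The same growth of $h(\gamma_1)$ also undermines your upper bound $\bigl|(\gamma_j/\gamma_1)(z_j/z_1)^m-1\bigr|_v\le C_0|z_1|_v^{-m}$, since $|x-\al_1|_v=|\gamma_1|_v|z_1|_v^m$ need not be large at the place maximizing $|z_1|_v$.

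The paper's remedy is precisely \emph{not} to absorb the unit part into $\gamma_i$. Lemma~\ref{L_powersplit-2} writes
\[
(x-\al_i)^{h_{L_1}h_{L_2}}=\eta_{i1}^{b_{i1}}\cdots\eta_{i,t_i-1}^{b_{i,t_i-1}}\gamma_i\xi_i^m
\]
with $|b_{ij}|\le m/2$ and $h(\gamma_i)\le C_2$ \emph{independent of $m$}; the Baker--Yu inequality (Proposition~\ref{P_Baker}) is then applied to the resulting $(t_1+t_2)$-term linear form in the logarithms of the fundamental units, of $\gamma_1/\gamma_2$, and of $\xi_1/\xi_2$. In this form $m$ enters only through $\log B=\log m$ and through $h(\xi_1/\xi_2)\ll X/m$, which after comparison with the upper bound yields the genuine inequality $m/\log m\ll C$. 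A secondary but quantitatively decisive point: working over the full splitting field $K(\al_1,\dots,\al_n)$, of degree up to $n!$, would inflate every constant; the paper carries out the descent over the fields $K(\al_i)$ and applies Baker over $K(\al_1,\al_2)$ (degree $\le n(n-1)$), which is what produces the polynomial-in-$n$ exponents asserted in~\eqref{bound_m}.
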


\section{Notation and auxiliary results}

We denote by $d,D_K,h_K,R_K$ the degree, discriminant, class number and regulator,
and by $O_K$ the ring of integers of $K$.
Further, we denote by $\cP(K)$ the collection of non-zero prime ideals of $O_K$.
For a non-zero fractional ideal $\fa$ of $O_K$ we have the unique factorization
$$
\fa=\prod_{\fp \in \cP(K)} \fp^{\ord_{\fp} \fa},
$$
where there are only finitely many prime ideals $\fp\in \cP(K)$ with $\ord_{\fp} \fa \ne 0$.
Given $\alpha_1\kdots\alpha_n\in K$, we denote by $[\alpha_1\kdots\alpha_n]_K$
the fractional ideal of $O_K$ generated by $\alpha_1\kdots \alpha_n$.
For a polynomial $f\in K[X]$ we denote by $[f]_K$ the fractional ideal
generated by the coefficients of $f$.
We denote by $N_K\fa$ the absolute norm of a fractional ideal of $O_K$.
In case that $\fa\subseteq O_K$ we have $N_K\fa =\# O_K/\fa$.

We define $\log^* x:=\max (1,\log x)$ for $x\geq 0$.

\subsection{Discriminant estimates}

Let $L$ be a finite extension of $K$.
Recall that the relative discriminant ideal
$\fd_{L/K}$ of $L/K$ is the ideal of $O_K$
generated by the numbers
$$
D_{L/K}(\omega_1, \dots , \omega_n)\ \ \mbox{with } \omega_1, \dots \omega_n \in O_L ,
$$
where $n:=[L:K]$.

\begin{lemma}\label{L_gen_disc_I}
Suppose that $L=K(\al)$ and let $f \in K[X]$ be
a square-free polynomial of degree $m$ with $f(\al)=0$.
Then
\begin{equation}\label{gen_disc_I}
\fd_{L/K} \ \supseteq \frac{[D(f)]_K}{[f]_K^{2m-2}}.
\end{equation}
\end{lemma}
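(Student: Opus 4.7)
The plan is to verify the asserted inclusion of fractional ideals locally at each prime $\mathfrak{p}$ of $O_K$, i.e.\ to prove
\[
\ord_\mathfrak{p}(\mathfrak{d}_{L/K}) \leq \ord_\mathfrak{p}(D(f)) - (2m-2)\,\ord_\mathfrak{p}([f]_K)
\]
for every $\mathfrak{p}$. First I reduce to the case $[L:K]=m$. Let $\mu\in K[X]$ be the monic minimal polynomial of $\alpha$, of degree $n=[L:K]\leq m$. Squarefreeness of $f$ yields a factorization $f=a_0\mu\nu$ with $\nu\in K[X]$ monic of degree $m-n$ coprime to $\mu$. Combining the classical identity $D(f)=a_0^{2m-2}D(\mu)D(\nu)\Res(\mu,\nu)^2$ and (by Gauss's lemma over the Dedekind domain $O_K$) $[f]_K=(a_0)\,[\mu]_K[\nu]_K$ with the homogeneity containments $(D(\nu))\subseteq[\nu]_K^{2(m-n)-2}$ and $(\Res(\mu,\nu))\subseteq[\mu]_K^{m-n}[\nu]_K^n$, one computes
\[
\frac{[D(f)]_K}{[f]_K^{2m-2}}\ \subseteq\ \frac{[D(\mu)]_K}{[\mu]_K^{2n-2}}.
\]
Hence, after renaming $\mu$ as $f$, I may assume $f$ is the monic minimal polynomial of $\alpha$, so $[L:K]=m$ and $\{1,\alpha,\dots,\alpha^{m-1}\}$ is a $K$-basis of $L$.

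Now fix $\mathfrak{p}$. Since the quantity $[D(f)]_K/[f]_K^{2m-2}$ is invariant under $f\mapsto\lambda f$ for any $\lambda\in K^*$, I scale so that $\tilde f:=\lambda f\in O_{K,\mathfrak{p}}[X]$ is \emph{primitive} at $\mathfrak{p}$, i.e.\ $\ord_\mathfrak{p}([\tilde f]_K)=0$; it then suffices to prove $\ord_\mathfrak{p}(\mathfrak{d}_{L/K})\leq\ord_\mathfrak{p}(D(\tilde f))$. Writing $\tilde f=\tilde a_0X^m+\cdots+\tilde a_m$, I set
\[
\gamma_0=1,\qquad \gamma_i=\sum_{j=0}^i\tilde a_j\,\alpha^{i-j}\quad(i=1,\dots,m-1).
\]
The transition matrix from $(1,\alpha,\dots,\alpha^{m-1})$ to $(\gamma_0,\dots,\gamma_{m-1})$ is triangular of determinant $\tilde a_0^{\,m-1}$, and since $D_{L/K}(1,\alpha,\dots,\alpha^{m-1})=D(\tilde f/\tilde a_0)=D(\tilde f)/\tilde a_0^{\,2m-2}$, a direct computation gives $D_{L/K}(\gamma_0,\dots,\gamma_{m-1})=D(\tilde f)$. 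Therefore, once each $\gamma_i$ is shown to lie in $O_{L,\mathfrak{p}}$, the defining property of $\mathfrak{d}_{L/K}$ (that $D_{L/K}(\omega_1,\dots,\omega_m)\in\mathfrak{d}_{L/K}$ for $\omega_i\in O_L$) yields the required local bound.

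The main obstacle is this integrality of the $\gamma_i$, and I plan to handle it by a dichotomy on the $\mathfrak{p}$-adic size of $\alpha$. Let $w$ be any extension of $\ord_\mathfrak{p}$ to $\overline{K_\mathfrak{p}}$, evaluated at an arbitrary Galois conjugate of $\alpha$ (which remains a root of $\tilde f$). If $w(\alpha)\geq 0$, every summand of $\gamma_i=\sum_{j=0}^i\tilde a_j\alpha^{i-j}$ satisfies $w(\tilde a_j)+(i-j)w(\alpha)\geq 0$, because $\tilde a_j\in O_{K,\mathfrak{p}}$ and $i-j\geq 0$. If instead $w(\alpha)<0$, I use $\tilde f(\alpha)=0$ to recast
\[
\gamma_i=-\sum_{j=i+1}^{m}\tilde a_j\,\alpha^{i-j};
\]
now every summand has $j>i$, so $(i-j)w(\alpha)>0$, and again $w\geq 0$. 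In both cases $w(\gamma_i)\geq 0$, so $\gamma_i\in O_{L,\mathfrak{p}}$. This case split, together with the use of $\tilde f(\alpha)=0$ to control the $\gamma_i$ at roots of negative valuation, is the crux of the argument.
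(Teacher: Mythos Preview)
Your proof is correct and follows essentially the same strategy as the paper: reduce to the case where $f$ is (a scalar multiple of) the minimal polynomial via the discriminant--resultant identity and Gauss's lemma, then use the Birch--Merriman elements $\gamma_i=\sum_{j=0}^i \tilde a_j\alpha^{i-j}$ and the relation $D_{L/K}(\gamma_0,\dots,\gamma_{m-1})=D(\tilde f)$.

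The one noteworthy difference is in how integrality of the $\gamma_i$ is established. The paper works globally: taking $h\in O_K[X]$ irreducible with $h(\alpha)=0$, it shows by induction that $\omega_i\in O_L$, using that from $\omega_i\alpha^{n-i}+b_{i+1}\alpha^{n-i-1}+\cdots+b_n=0$ one obtains a monic equation for $\omega_i\alpha$ over $O_L$. You instead work prime by prime, scale to a $\mathfrak p$-primitive $\tilde f$, and use the valuation dichotomy on $w(\alpha)$ together with the identity $\gamma_i=-\sum_{j>i}\tilde a_j\alpha^{i-j}$ to conclude $\gamma_i\in O_{L,\mathfrak p}$. Both arguments are short and self-contained; yours is a clean local reformulation of the same Birch--Merriman trick, and the implicit use of the compatibility of $\mathfrak d_{L/K}$ with localization is standard.
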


\begin{proof}
We have inserted a proof for lack of a good reference.
We write $[\cdot ]$ for $[\cdot ]_K$.
Let $g\in K[X]$ be the monic minimal polynomial of $\al$. Then $f=g_1g_2$ with $g_2\in K[X]$. Let $n:=\deg g_1$ and $k:=\deg h_1$.
Then
$$
D(f)=D(g_1)D(g_2)R(g_1,g_2)^2,
$$
where $R(g_1,g_2)$ is the resultant of $g_1$ and $g_2$.
Using determinantal expressions for
$D(g_1)$, $D(g_2)$, $R(g_1,g_2)$ we get
$$
D(g_1)\in [g_1]^{2n-2}, \quad D(g_2)\in [g_2]^{2k-2}, \quad R(g_1,g_2) \in [g_1]^k[g_2]^n,
$$
and by Gauss' Lemma, $[f]=[g_1]\cdot [g_2]$. Hence
$$
\frac{[D(f)]}{[f]^{2m-2}}=\frac{[D(g_1)]}{[g_1]^{2n-2}}\frac{[D(g_2)]}{[g_2]^{2k-2}}\frac{[R(g_1,g_2)]}{[g_1]^k[g_2]^n} \subseteq \frac{[D(g_1)]}{[g_1]^{2n-2}}.
$$
Therefore, it suffices to prove
$$
\fd_{L/K}\supset\frac{[D(g_1)]}{[g_1]^{2n-2}}.
$$
Note that $[g_1]^{-1}$ consists of all $\lambda\in K$ with $\lambda g_1\in O_K[X]$.
Hence the ideal $[D(g_1)]\cdot [g_1]^{-2n+2}$
is generated by the numbers $\lambda^{2n-2}D(g_1)=D(\lambda g_1)$
such that $\lambda g_1\in O_K[X]$. Writing $h:=\lambda g_1$, we see that it
suffices to prove that if $h\in O_K[X]$ is irreducible in $K[X]$
and $h(\alpha )=0$ with $L=K(\alpha )$, then
$$
D(h)\in \fd_{L/K}.
$$
To prove this, we use an argument of  Birch and Merriman \cite{BirMer}.
Let $h(X)=b_0X^m+b_1x^{m-1}+\dots +b_m \in O_K[X]$ with $h(\al)=0$.
Put
\[
\omega_i:=b_0\al^i+b_1\al^{i-1}+\cdots +b_i\ \ (i=0,1, \dots, n).
\]
We show by induction on $i$ that $\omega_i\in O_L$.
For $i=0$ this is clear. Assume that we have
proved that $\omega_i\in O_L$ for some $i \geq 0$. By $h(\al)=0$ we clearly have
\[
\omega_i\al^{n-i}+b_{i+1}\al^{n-i-1}+\cdots+b_n=0.
\]
By multiplying this expression with $\omega_i^{n-i-1}$, we see that
$\omega_i\al$ is a zero of a monic polynomial from $O_L[X]$,
hence belongs to $O_L$.
Therefore, $\omega_{i+1}=\omega_i\al+b_{i+1} \in O_L$.

Now on the one hand, $D_{L/K}(1,\omega_1, \dots, \omega_{n-1})\in \fd_{L/K}$,
on the other hand,
\begin{eqnarray*}
D_{L/K}(1,\omega_1, \dots, \omega_{n-1})&=&b_0^{2n-2}D_{L/K}(1,\al,\dots, \al^{n-1})
\\
&=&
b_0^{2n-2}\prod_{1\leq i<j\leq 0}(\al^{(i)}-\al^{(j)})^2=D(h).
\end{eqnarray*}
Hence $D(h)\in \fd_{L/K}$.
\end{proof}

Put $u(n):=\lcm(1,2,\dots ,n)$.
For the possible prime factors of the discriminant $\fd_{L/K}$ we have:

\begin{lemma}\label{L_gen_disc_primedivisors}
Let $[L:K]=n$. Then
for every prime ideal $\fp \in \cP(K)$ with $\ord_{\fp}(\fd_{L/K})>0$ we have
$$
\ord_{\fp}(\fd_{L/K})\leq n\cdot (1+\ord_{\fp}(u(n))).
$$
\end{lemma}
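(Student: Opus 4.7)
The plan is to reduce the global statement to a local computation at each prime $\fP$ of $L$ above $\fp$, and then invoke Serre's standard bound for the differential exponent in a local extension.

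First I would use the fact that the relative discriminant behaves well under localization and completion: if $\fP_1,\dots,\fP_g$ are the primes of $O_L$ above $\fp$, with ramification indices $e_i=e(\fP_i/\fp)$ and residue degrees $f_i=f(\fP_i/\fp)$, then
\[
\ord_\fp(\fd_{L/K})=\sum_{i=1}^g f_i\,d_i,
\]
where $d_i$ is the differential exponent of the local extension $L_{\fP_i}/K_\fp$ (i.e.\ the $\fP_i$-adic valuation of the different). Recall also $\sum_i e_if_i=n$.

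Next I would apply Serre's bound (Local Fields, Ch.~III, Prop.~13): for every local extension the differential exponent satisfies
\[
e_i-1\;\le\; d_i\;\le\; e_i-1+v_{\fP_i}(e_i),
\]
where $v_{\fP_i}$ is the normalized valuation on $L_{\fP_i}$ (so $v_{\fP_i}(\pi_{\fP_i})=1$). Since $v_{\fP_i}$ restricts on $K_\fp$ to $e_i\cdot\ord_\fp$, we get $v_{\fP_i}(e_i)=e_i\cdot\ord_\fp(e_i)$. Plugging in,
\[
f_i d_i\;\le\; f_i(e_i-1)+f_ie_i\,\ord_\fp(e_i)\;\le\; e_if_i\bigl(1+\ord_\fp(e_i)\bigr).
\]

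Finally, since $e_i\le n$, the integer $e_i$ divides $u(n)=\lcm(1,\dots,n)$, so $\ord_\fp(e_i)\le\ord_\fp(u(n))$. Summing over $i$ and using $\sum_i e_if_i=n$ yields
\[
\ord_\fp(\fd_{L/K})\;\le\;\sum_{i=1}^g e_if_i\bigl(1+\ord_\fp(u(n))\bigr)\;=\;n\bigl(1+\ord_\fp(u(n))\bigr),
\]
which is exactly the desired inequality.

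The only nontrivial ingredient is Serre's inequality $d_i\le e_i-1+v_{\fP_i}(e_i)$, so the ``main obstacle'' is really bookkeeping rather than a genuine difficulty: one must be careful with the normalization of valuations (the factor $e_i$ when passing from $v_{\fP_i}$ to $\ord_\fp$) and with the observation that \emph{every} integer $\le n$ divides $u(n)$, which is what makes the bound uniform in the ramification type.
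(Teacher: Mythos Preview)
Your proof is correct and essentially identical to the paper's own argument: the paper also writes $\ord_\fp(\fd_{L/K})=\sum_{\fP\mid\fp} f(\fP|\fp)\,\ord_\fP(\fD_{L/K})$, invokes the same bound on the differential exponent (citing Neukirch rather than Serre), converts $\ord_\fP(e)$ to $e\cdot\ord_\fp(e)$, and concludes via $\sum e_if_i=n$ and $e_i\mid u(n)$. The only cosmetic difference is that the paper phrases everything in terms of the global different $\fD_{L/K}$ rather than passing to local completions, but the computation is the same line for line.
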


\begin{proof}
Let $\fD_{L/K}$ denote the different of $L/K$.
According to J. Neukirch \cite[p. 210, Theorem 2.6]{Neukirch1},
we have for every prime ideal $\fP$ of $L$ lying above $\fp$
\begin{eqnarray*}
\ord_{\fP}(\fD_{L/K})&\leq& e(\fP | \fp)-1+\ord_{\fP}(e(\fP | \fp))
\\
&\leq& e(\fP | \fp)-1+e(\fP | \fp) \ord_{\fp}(e(\fP | \fp)),
\end{eqnarray*}
where $e(\fP |\fp )$, $f(\fP |\fp)$ denote the ramification index and residue class degree
of $\fP$ over $\fp$.
Using $\fd_{L/K}=N_{L/K}\fD_{L/K}$, $N_{L/K}\fP=\fp^{f(\fP |\fp )}$,\\
$\sum_{\fP|\fp } e(\fP |\fp )f(\fP |\fp )=[L:K]\leq n$, we infer
$$
\begin{aligned}
\ord_{\fp}(\fd_{L/K})&=\ord_{\fp}(N_{L/K}\fD_{L/K})=\sum_{\fP | \fp} f(\fP | \fp)\ord_{\fP}(\fD_{L/K}) \\
&\leq \sum_{\fP | \fp} f(\fP | \fp)e(\fP | \fp)(1+\ord_{\fp}(e(\fP | \fp))
\\
&\leq n(1+\ord_{\fp}(u(n))).
\end{aligned}
$$
\end{proof}

\begin{lemma}\label{L_gen_disc_extensions}
{\rm (i)} Let $M\supset L \supset K$ be a tower of finite extensions. Then we have
$$
\fd_{M/K} = N_{L/K}(\fd_{M/L}) \fd_{L/K}^{[M:L]}  .
$$
{\rm (ii)} Let $L_1, L_2$ be finite extensions of $K$.
Then for their compositum $L_1 \cdot L_2$ we have
$$
\fd_{L_1L_2/K} \supseteq \fd_{L_1/K}^{[L_1L_2:L_1]} \fd_{L_2/K}^{[L_1L_2:L_2]}.
$$
\end{lemma}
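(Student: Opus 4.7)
The plan is as follows.

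For part (i), I would derive this from the classical tower formula for the \emph{different}, namely $\fD_{M/K} = \fD_{M/L}\cdot (\fD_{L/K}O_M)$, which can be taken as standard (see e.g.\ Neukirch). Applying the absolute norm $N_{M/K} = N_{L/K}\circ N_{M/L}$ to both sides and using the two basic identities $N_{M/L}(\fD_{M/L}) = \fd_{M/L}$ and $N_{M/L}(\fa\cdot O_M) = \fa^{[M:L]}$ for an ideal $\fa$ of $O_L$, one gets
\[
\fd_{M/K} = N_{M/K}(\fD_{M/K}) = N_{L/K}\bigl(N_{M/L}(\fD_{M/L})\bigr)\cdot N_{L/K}\bigl(\fD_{L/K}^{[M:L]}\bigr) = N_{L/K}(\fd_{M/L})\cdot\fd_{L/K}^{[M:L]},
\]
which is exactly the asserted identity.

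For part (ii), the key observation is that (i) applied to the two towers $L_1L_2/L_1/K$ and $L_1L_2/L_2/K$ already contains all the information needed. From the tower $L_1L_2/L_1/K$ and (i) we get
\[
\fd_{L_1L_2/K} = N_{L_1/K}(\fd_{L_1L_2/L_1})\cdot\fd_{L_1/K}^{[L_1L_2:L_1]}\;\supseteq\;\fd_{L_1/K}^{[L_1L_2:L_1]},
\]
since $N_{L_1/K}(\fd_{L_1L_2/L_1})$ is an integral ideal of $O_K$. Applying the same argument to the tower $L_1L_2/L_2/K$ gives the analogous inclusion $\fd_{L_1L_2/K}\supseteq\fd_{L_2/K}^{[L_1L_2:L_2]}$.

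Now I would multiply these two inclusions as ideals: if $\fa\supseteq\fb_1$ and $\fa\supseteq\fb_2$, then $\fa^2\supseteq\fb_1\fb_2$. This yields
\[
\fd_{L_1L_2/K}^{\,2}\;\supseteq\;\fd_{L_1/K}^{[L_1L_2:L_1]}\cdot\fd_{L_2/K}^{[L_1L_2:L_2]}.
\]
Finally, because $\fd_{L_1L_2/K}$ is an integral ideal of $O_K$ one has $\fd_{L_1L_2/K}\supseteq\fd_{L_1L_2/K}^{\,2}$, and chaining the two inclusions gives the claim in (ii). The main obstacle is conceptual rather than computational: one might be tempted to try a base-change inclusion $\fd_{L_1L_2/L_1}\supseteq\fd_{L_2/K}O_{L_1}$, which requires delicate handling of linear disjointness when $L_1\cap L_2\supsetneq K$; the simple trick of using the square-containment $\fa\supseteq\fa^2$ for integral $\fa$ sidesteps this entirely and makes the proof a direct corollary of (i).
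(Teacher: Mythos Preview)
Your argument for part (i) is correct and is the standard derivation underlying the reference the paper cites (Neukirch, Korollar 2.10).

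Part (ii), however, contains a fatal sign error. From the tower formula you correctly write
\[
\fd_{L_1L_2/K} = N_{L_1/K}(\fd_{L_1L_2/L_1})\cdot\fd_{L_1/K}^{[L_1L_2:L_1]},
\]
but then conclude $\fd_{L_1L_2/K}\supseteq\fd_{L_1/K}^{[L_1L_2:L_1]}$. This is backwards: if $\fa$ is an \emph{integral} ideal then $\fa\subseteq O_K$, hence $\fa\fb\subseteq\fb$, i.e.\ $\fb\supseteq\fa\fb$. So the tower formula only gives $\fd_{L_1/K}^{[L_1L_2:L_1]}\supseteq\fd_{L_1L_2/K}$, the reverse of what you need. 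Concretely, take $K=\Qq$, $L_1=\Qq(\sqrt 2)$, $L_2=\Qq(\sqrt 3)$: then $\fd_{L_1L_2/\Qq}=(2304)$ while $\fd_{L_1/\Qq}^{2}=(64)$, and $(2304)\not\supseteq(64)$. Since your two basic inclusions already point the wrong way, the squaring trick cannot rescue the argument.

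Ironically, the ``tempted'' route you dismiss at the end is precisely the missing ingredient. One needs a genuine base-change statement such as
\[
\fD_{L_1L_2/L_1}\ \supseteq\ \fD_{L_2/K}\,O_{L_1L_2},
\]
which does hold without any linear-disjointness hypothesis (this is Stark's Lemma 6, the reference the paper invokes). Combining it with the different tower $\fD_{L_1L_2/K}=\fD_{L_1L_2/L_1}\cdot(\fD_{L_1/K}O_{L_1L_2})$ gives $\fD_{L_1L_2/K}\supseteq (\fD_{L_1/K}O_{L_1L_2})(\fD_{L_2/K}O_{L_1L_2})$, and taking $N_{L_1L_2/K}$ yields the claimed inclusion for discriminants. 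So part (ii) is not a formal corollary of (i); it requires this additional input.
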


\begin{proof}
For (i) see Neukirch \cite[p. 213, Korollar 2.10]{Neukirch1}.
For (ii) apply Stark \cite[Lemma 6]{Stark1} and take norms.
\end{proof}

\begin{lemma}\label{L_gen_disc_primes_excluded}
Let $m\in \Z_{\geq 0}$, $\ga \in K^*$ and $L:=K(\sqrt[m]{\ga})$. Further, let $\fp \in \cP(K)$ be a prime ideal with
$$
\ord_{\fp}(m)=0,\ \ \ \  \ord_{\fp}(\ga) \equiv 0 \pmod m.
$$
Then $L/K$ is unramified at $\fp$, i.e.
$$
\ord_{\fp}(\fd_{L/K}) = 0.
$$
\end{lemma}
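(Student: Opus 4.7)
The plan is to apply Lemma \ref{L_gen_disc_I} to a well-chosen generator of $L/K$, namely $\al=\sqrt[m]{\ga}$ itself, with the polynomial $f(X)=X^m-\ga$. The cases $m\in\{0,1\}$ are trivial (then $L=K$), so assume $m\geq 2$. First I would normalize $\gamma$: since $\ord_\fp(\ga)=mk$ for some $k\in\Zz$, I choose $\de\in K^*$ with $\ord_\fp(\de)=-k$ (such $\de$ exists in any Dedekind domain — e.g. take a power of a uniformizer of $\fp$) and replace $\gamma$ by $\ga\de^m$. This preserves $L=K(\sqrt[m]{\ga})$, does not affect $\ord_\fp(m)$, and achieves $\ord_\fp(\ga)=0$; since only the prime $\fp$ matters, the fact that this substitution may change ramification at other primes is irrelevant.

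Next I would take $f(X):=X^m-\ga\in K[X]$. Since $K$ has characteristic $0$ and $\ga\ne 0$, we have $\gcd(f,f')=\gcd(X^m-\ga,mX^{m-1})=1$ in $K[X]$, so $f$ is square-free of degree $m$, and $f(\al)=0$. Lemma \ref{L_gen_disc_I} thus gives
\[
\fd_{L/K}\supseteq \frac{[D(f)]_K}{[f]_K^{2m-2}}.
\]
A direct computation (or the standard formula for the discriminant of $X^m-a$) yields $D(f)=\pm m^m\ga^{m-1}$, so
\[
\ord_\fp\bigl([D(f)]_K\bigr)=m\cdot\ord_\fp(m)+(m-1)\cdot\ord_\fp(\ga)=0
\]
by the hypothesis $\ord_\fp(m)=0$ and the normalization $\ord_\fp(\ga)=0$. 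On the other hand, the coefficients of $f$ are $1,0,\ldots,0,-\ga$, so $[f]_K=(1,\ga)$, which satisfies $\ord_\fp([f]_K)=\min(0,\ord_\fp(\ga))=0$.

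Combining these, $\ord_\fp\bigl([D(f)]_K/[f]_K^{2m-2}\bigr)=0$, so the containment above forces $\ord_\fp(\fd_{L/K})\leq 0$; together with $\fd_{L/K}\subseteq O_K$, this gives $\ord_\fp(\fd_{L/K})=0$ as required. There is no real obstacle here: the only subtlety is the reduction to $\ord_\fp(\ga)=0$, which is why the hypothesis is stated modulo $m$ rather than as an equality with $0$.
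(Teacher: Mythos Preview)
Your proof is correct and follows essentially the same approach as the paper: both normalize $\ga$ by an $m$-th power so that $\ord_\fp(\ga)=0$, then apply Lemma~\ref{L_gen_disc_I} to $f(X)=X^m-\ga$ and use $D(f)=\pm m^m\ga^{m-1}$ together with $[f]_K=[1,\ga]_K$ to conclude that $\ord_\fp(\fd_{L/K})=0$. The paper phrases the normalization via a uniformizer $\tau$ at $\fp$ (writing $\ga=\tau^{mt}\ve$ with $\ord_\fp(\ve)=0$), which is exactly your choice $\de=\tau^{-t}$.
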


\begin{proof}
Choose $\tau \in K^*$ such that $\ord_{\fp}(\tau)=1$. Then $\ga =\tau^{mt}\ve$ with $t \in \Z$ and $\ord_{\fp}(\ve)=0$.
We clearly have $L=K(\sqrt[m]{\ve})$, hence
$$
\fd_{L/K} \supseteq \frac{[D(X^m-\ve)]}{[1, \ve]^{2m-2}} = \frac{[m^m\ve^{m-1}]}{[1, \ve]^{2m-2}}.
$$
This implies $\ord_{\fp} (\fd_{L/K})=0$.
\end{proof}

\subsection{$S$-integers}

Let $K$ be an algebraic number field and denote by $M_K$ its set of places.
We keep using throughout the absolute values defined by \eqref{abs_val}.
Recall that these absolute values satisfy the product formula
\[
\prod_{v\in M_K} |\al |_v=1\ \ \mbox{for } \al\in K^* .
\]
If $L$ is a finite extension of $K$, and $v,w$ places of $K,L$, respectively,
we say that $w$ lies above $v$, notation $w|v$,
if the restriction of $|\cdot |_w$ to $K$
is a power of $|\cdot |_v$, and in that case we have
\[
|\al |_w=|\al |_v^{[L_w:K_v]}\ \ \mbox{for } \al\in K,
\]
where $K_v,L_w$ denote the completions of $K$ at $v$, $L$ at $w$, respectively.
In case that $v=\fp$, $w=\fP$ are  prime ideals of $O_K,O_L$, respectively,
we have $w|v$ if and only if $\fp\subset\fP$.

Let $S$ be a finite set of places of $K$ containing all infinite places.
The non-zero fractional ideals of the ring of $S$-integers $O_S$
(i.e., finitely generated $O_S$-submodules of $K$) form a group under multiplication, and there is
an isomorphism from the multiplicative group
of non-zero fractional ideals of $O_S$ to the group of fractional ideals of $O_K$ composed of prime ideals outside $S$
given by $\fa\mapsto \fa^*$, where $\fa =\fa^*O_S$. We define the $S$-norm
of a fractional ideal of $O_S$ by
\[
N_S(\fa ):=N_K\fa^*=\mbox{absolute norm of $\fa^*$.}
\]
Given $\alpha_1\kdots\alpha_r\in K$ we denote by
$[\alpha_1\kdots\alpha_r]_S$ the fractional ideal of $O_S$ generated
by $\alpha_1\kdots\alpha_r$.
We have
\begin{equation}\label{3.1}
N_S([\alpha_1\kdots\alpha_r]_S)=\prod_{v\in M_K\setminus S}\max (|\alpha_1|_v\kdots |\alpha_r|_v)^{-1}.
\end{equation}
Further, for $\alpha\in K$ we define $N_S(\alpha ):=N_S([\alpha ]_S)$.
By the product formula,
\begin{equation}\label{3.2}
N_S(\alpha )=\prod_{v\in S} |\alpha |_v\ \ \mbox{for } \alpha\in K.
\end{equation}

Let $L$ be a finite extension of $K$, and $T$ the set of places of $L$ lying above
the places in $S$. Then the ring of $T$-integers $O_T$ is the integral closure in $L$
of $O_S$. Every fractional ideal $\fA$ of $O_T$ can be expressed uniquely as
$\fA =\fA^*O_T$ where $\fA^*$ is a fractional ideal of $O_L$ composed of prime ideals
outside $T$. We put
\[
N_T\fA := N_L\fA^*,\ \ \ N_{T/S}\fA := (N_{L/K}\fA^*)O_S.
\]
Then
\begin{equation}\label{norm-relation}
\left\{\begin{array}{l}
N_T\fA=N_S(N_{T/S}\fA),
\\
N_T(\fa O_T)=N_S\fa^{[L:K]}\ \mbox{for a fractional ideal $\fa$ of $O_S$.}
\end{array}\right.
\end{equation}

Let $\fp_1\kdots \fp_t$ be the prime ideals in $S$ and put $Q_S:=\prod_{i=1}^t N_K\fp_i$.
Let $\fP_1\kdots \fP_{t'}$ be the prime ideals in $T$ and put
$Q_T:=\prod_{i=1}^{t'} N_K\fP_i$. Then
for every prime ideal $\fp$ of $O_K$ we have
\[
\prod_{\fP \mid \fp}N_L\fP =\prod_{\fP \mid \fp}(N_K\fp )^{f_{\fP\mid\fp}}
\leq \prod_{\fP \mid \fp}(N_K\fp )^{e_{\fP\mid\fp}\cdot f_{\fP\mid\fp}}
\leq (N_K\fp )^{[L:K]},
\]
where the product is over all prime ideals $\fP$ of $O_L$ dividing $\fp$
and where $e(\fP |\fp)$, $f(\fP |\fp )$ denote the ramification index and residue class degree of $\fP$ over $\fp$. Hence
\begin{equation}\label{bound-3}
Q_T\leq Q_S^{[L:K]}.
\end{equation}

\subsection{Class number and regulator}

Let again $K$ be a number field.

\begin{lemma}\label{L_hR}
For the regulator $R_K$ and class number $h_K$ of $K$ we have the following estimates:
\begin{align}
&R_K \geq 0.2, \label{regulator_lower_bound}\\
&h_KR_K \leq |D_K|^{\frac{1}{2}}(\log^* |D_K|)^{d-1} \label{bound_hkRk}.
\end{align}
\end{lemma}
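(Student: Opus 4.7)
The plan is to derive both bounds from classical results in analytic number theory; essentially the proof is a matter of citing the literature and checking that the numerical constants fit.

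For the lower bound $R_K\geq 0.2$, I would first dispose of the trivial case in which the unit rank $r=r_1+r_2-1$ equals zero: this happens precisely when $K=\Q$ or $K$ is imaginary quadratic, and in both cases $R_K=1$ by the usual convention that the empty determinant equals $1$. When $r\geq 1$ I would invoke the sharp lower bound of Friedman (refining earlier work of Zimmert), which asserts $R_K>0.2052$ for every such number field, and in particular implies the claimed inequality $R_K\geq 0.2$.

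For the upper bound $h_KR_K\leq|D_K|^{1/2}(\log^*|D_K|)^{d-1}$, I would start from the analytic class number formula
\[
\mathrm{Res}_{s=1}\zeta_K(s)=\frac{2^{r_1}(2\pi)^{r_2}\,h_KR_K}{w_K|D_K|^{1/2}},
\]
solved for $h_KR_K$, and then substitute a Landau-type upper bound for the residue of the Dedekind zeta function at $s=1$, of the shape $\mathrm{Res}_{s=1}\zeta_K(s)\leq C(d)(\log^*|D_K|)^{d-1}$, verifying that the factor $w_KC(d)/(2^{r_1}(2\pi)^{r_2})$ is bounded by $1$. Such explicit estimates are available in the literature (for instance in the work of Lenstra and of Louboutin on explicit upper bounds for $\zeta_K^*(1)$), so the whole argument reduces to assembling these ingredients.

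The main obstacle is purely numerical bookkeeping: confirming that the constants in the cited inequalities are tight enough to yield exactly the clean forms $0.2$ and an implicit constant equal to $1$ that appear in the statement. No genuinely new argument is needed beyond checking that the available explicit versions of Friedman's regulator bound and of the Landau estimate for $\zeta_K^*(1)$ fit together as required.
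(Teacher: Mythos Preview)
Your proposal is correct and matches the paper's own proof, which simply cites Friedman for the regulator lower bound and Louboutin (via Gy\H{o}ry--Yu) for the upper bound on $h_KR_K$. Your write-up supplies more of the underlying mechanism (the analytic class number formula and the Landau-type residue bound), but the approach is the same.
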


\begin{proof}
Statement \eqref{regulator_lower_bound} is a result of Friedman \cite{Friedman1}. Inequality \eqref{bound_hkRk} follows from Louboutin \cite{Louboutin1},
see also (59) in Gy\H ory and Yu \cite{GyYu1}.
\end{proof}

Let $S$ be a finite set of places of $K$ consisting of the infinite places
and of the prime ideals $\fp_1\kdots\fp_t$.
Then the $S$-regulator $R_S$ is given by
\begin{equation}\label{3.3}
R_S=h_SR_K\prod_{i=1}^t\log N_K\fp_i,
\end{equation}
where $h_S$ is the order of the group generated by the ideal classes of
$\fp_1\kdots\fp_t$ and where $h_S$ and the product are $1$ if $S$ consists only
of the infinite places.
Together with
Lemma \ref{L_hR} this implies
\begin{equation}\label{3.4}
\mbox{$\frac{1}{5}\ln 2$}\leq R_S\leq |D_K|^{\frac{1}{2}}(\log^* |D_K|)^{d-1}\cdot
(\log P_S)^t,
\end{equation}
where the last factor has to be interpreted as $1$ if $t=0$.

\subsection{Heights}

We define the absolute logarithmic height of $\al \in \OQq$ by
$$
h(\al)=\frac{1}{[K:\Qq ]}\sum_{v \in M_K} \max (0,\log |\al|_v),
$$
where $K$ is any number field with $K\ni\al$.
More generally, we define the logarithmic height of a polynomial
$f(X)=a_0x^n+\dots +a_n\in \OQq [X]$ by
$$
h(f):=\frac{1}{[K:\Qq ]}\sum_{v \in M_K} \log \max (1,|a_0|_v, \dots ,|a_n|_v)
$$
where $K$ is any number field with $f\in K[X]$. These heights do not depend
on the choice of $K$.

We will frequently use the
inequalities
\[
h(\alpha_1\cdots\alpha_n)\leq \sum_{i=1}^n h(\alpha_i),\ \ \
h(\alpha_1+\cdots +\alpha_n)\leq \sum_{i=1}^n h(\alpha_i)+\log n
\]
for $\alpha_1\kdots\alpha_n\in\OQq$ and the equality
\[
h(\alpha^m)=|m|h(\alpha )\ \ \mbox{for } \alpha\in\OQq^*,\, m\in\Zz .
\]
(see Waldschmidt \cite[Chapter 3]{Waldschmidt2000}).
Further we frequently use the trivial fact that if $\alpha$ belongs
to a number field $K$ and $S$ is a finite set of places of $K$ containing
the infinite places, then
\[
h(\alpha)\geq \frac{1}{[K:\Qq ]}\log N_S(\alpha ).
\]

We have collected some further facts.

\begin{lemma}\label{L_bound_roots_by_pol}
Let $\alpha_1\kdots\alpha_n\in\OQq$ and $f=(X-\alpha_1)\cdots (X-\alpha_n)$.
Then
\[
|h(f)-\sum_{i=1}^n h(\alpha_i)|\leq n\log 2.
\]
\end{lemma}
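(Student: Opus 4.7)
The plan is to work at each place of a common number field containing all the roots, exploiting the fact that $f$ is monic so that $a_0=1$ and hence $\max(1,|a_0|_v,\dots,|a_n|_v)=\max_i|a_i|_v$ at every place. I would choose a number field $L\supseteq\Qq(\alpha_1,\dots,\alpha_n)$ and rewrite
\[
h(f)-\sum_{i=1}^n h(\alpha_i)=\frac{1}{[L:\Qq]}\sum_{w\in M_L}\Bigl(\log\max_i|a_i|_w-\log\prod_{i=1}^n\max(1,|\alpha_i|_w)\Bigr),
\]
so the task becomes bounding the place-by-place discrepancy between the ``height'' and the ``Mahler measure'' of $f$ at $w$.

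For a non-archimedean place $w$, I would invoke the Gauss-lemma identity: writing $f=\prod(X-\alpha_i)$, an ultrametric argument (or induction on $n$) gives
\[
\max_i|a_i|_w=\prod_{i=1}^n\max(1,|\alpha_i|_w),
\]
so these places contribute $0$ to the difference. For an archimedean $w$ coming from an embedding $\tau:L\hookrightarrow\Cc$, with $[L_w:\Rr]\in\{1,2\}$, I would apply $\tau$ to $f$ and use the classical Mahler/Gel'fond inequalities for complex polynomials
\[
\prod_{i=1}^n\max(1,|\tau\alpha_i|)\leq\tbinom{n}{\lfloor n/2\rfloor}\max_i|\tau a_i|\leq 2^n\max_i|\tau a_i|,
\]
and the reverse inequality of the same shape obtained by expanding $\prod(X-\tau\alpha_i)$, yielding
\[
\bigl|\log\max_i|\tau a_i|-\log\prod_i\max(1,|\tau\alpha_i|)\bigr|\leq n\log 2.
\]
Raising both sides to the exponent $[L_w:\Rr]$ translates this to the normalized absolute values of the paper.

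Collecting the contributions, the non-archimedean places drop out and
\[
\bigl|h(f)-\textstyle\sum_i h(\alpha_i)\bigr|\leq\frac{n\log 2}{[L:\Qq]}\sum_{w\mid\infty}[L_w:\Rr]=n\log 2,
\]
by the usual degree identity $\sum_{w\mid\infty}[L_w:\Rr]=[L:\Qq]$. I expect the only delicate step is the archimedean Mahler inequality; the rest is bookkeeping. This can either be quoted from Waldschmidt \cite{Waldschmidt2000} (where it appears as a standard height lemma) or proved directly by bounding each elementary symmetric function of the roots in terms of $\prod_i\max(1,|\alpha_i|)$ and vice versa. The whole argument is independent of the normalization conventions \eqref{abs_val} because the factor $[L_w:\Rr]$ in the local estimate exactly matches the exponent used to define $|\cdot|_w$.
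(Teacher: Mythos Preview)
Your argument is correct and is precisely the standard proof; the paper does not prove this lemma but simply cites Bombieri--Gubler \cite[p.~28, Theorem~1.6.13]{BombGub}, where exactly the place-by-place argument you sketch (Gauss' lemma at the finite places, the Mahler/Gel'fond comparison at the archimedean ones) is carried out. One small remark: the intermediate constant $\binom{n}{\lfloor n/2\rfloor}$ you quote is the sharp one for the direction $\max_i|a_i|\leq\binom{n}{\lfloor n/2\rfloor}\prod_i\max(1,|\alpha_i|)$ (via the elementary symmetric function bound), but in the reverse direction the cleanest bound is $M(f)\leq L(f)\leq (n+1)\|f\|_\infty$; either way this is $\leq 2^n\|f\|_\infty$, so your conclusion stands.
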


\begin{proof}
See Bombieri and Gubler \cite[p.28, Thm.1.6.13]{BombGub}.
\end{proof}

\begin{lemma}\label{disc-height}
Let $K$ be a number field and $f=a_0X^n+a_1X^{n-1}+\cdots +a_n\in K[X]$
a polynomial of degree $n$ with discriminant $D(f)\not= 0$.
Then
\begin{eqnarray*}
{\rm (i)}&&|D(f)|_v\leq n^{(2n-1)s(v)}\max (|a_0|_v\kdots |a_n|_v)^{2n-2}\ \ \mbox{for } v\in M_K,
\\
{\rm (ii)}&&h(D(f))\leq (2n-1)\log n +(2n-2)h(f),
\end{eqnarray*}
where $s(v)=1$ if $v$ is real, $s(v)=2$ if $v$ is complex, $s(v)=0$ if $v$ is finite.
\end{lemma}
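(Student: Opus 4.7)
The plan is to prove (i) first, treating finite and infinite places of $K$ separately, and then to deduce (ii) by summing logarithms over all $v\in M_K$.

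For finite $v$: Observe that $D(f)$ is the evaluation at $(a_0,\ldots,a_n)$ of an integer polynomial $\Delta(A_0,\ldots,A_n)\in\Zz[A_0,\ldots,A_n]$ that is homogeneous of total degree $2n-2$. This follows from the resultant identity $\pm a_0 D(f)=\det S$, where $S$ is the $(2n-1)\times(2n-1)$ Sylvester matrix of $f$ and $f'$: $\det S$ lies in $\Zz[a_0,\ldots,a_n]$ and has total degree $2n-1$, and its first column becomes $0$ when $a_0=0$, so $a_0\mid\det S$ as polynomials, yielding $D(f)\in\Zz[a_0,\ldots,a_n]$ of total degree $2n-2$. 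The ultrametric inequality, together with the fact that integer coefficients satisfy $|c|_v\leq 1$ at non-archimedean $v$, then gives $|D(f)|_v\leq\max_j|a_j|_v^{2n-2}$, which matches (i) because $s(v)=0$.

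For infinite $v$: Expand $\det S$ along its first column, which has only two nonzero entries, $a_0$ in row $1$ (from $f$) and $na_0$ in row $n$ (from $f'$); this yields $\det S = a_0\bigl(M_{11}+(-1)^{n+1}n\,M_{n1}\bigr)$ for two $(2n-2)\times(2n-2)$ minors $M_{11},M_{n1}$ with entries in $\{a_j,\,ka_j\}$, and hence $D(f)=\pm\bigl(M_{11}+(-1)^{n+1}n\,M_{n1}\bigr)$. This elimination of the troublesome factor $a_0^{-1}$ is the key point. Apply Hadamard's inequality to each minor: rows coming from $f$ have Euclidean norm $\leq\sqrt{n+1}\max_j|a_j|$, rows coming from $f'$ have norm $\leq n^{3/2}\max_j|a_j|$, and summing the two bounded minors (with the scalar $n$ on the second) yields $|D(f)|\leq n^{2n-1}\max_j|a_j|^{2n-2}$ for real $v$ after careful accounting of the row counts. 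For complex $v$ the convention $|\cdot|_v=|\sigma(\cdot)|^2$ squares both sides, producing the factor $n^{(2n-1)s(v)}$ and leaving the exponent $2n-2$ intact.

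For (ii), take logarithms in (i) (noting $\max(1,|D(f)|_v)\leq n^{(2n-1)s(v)}\max(1,|a_0|_v,\ldots,|a_n|_v)^{2n-2}$ since $n^{(2n-1)s(v)}\geq 1$) and sum over $v\in M_K$:
\[
\sum_{v\in M_K}\log\max(1,|D(f)|_v) \;\leq\; (2n-1)(\log n)\sum_{v\mid\infty}s(v) + (2n-2)\sum_{v\in M_K}\log\max(1,|a_0|_v,\ldots,|a_n|_v).
\]
Since $\sum_{v\mid\infty}s(v)=[K:\Qq]=d$ and the last sum equals $d\,h(f)$, dividing by $d$ gives (ii).

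The main obstacle is to achieve the explicit constant $n^{2n-1}$ in (i) at infinite places: a direct Hadamard estimate on the minors $M_{11},M_{n1}$ produces a multiplicative factor of order $n^{3n/2}$ times small correction terms, which is only asymptotically smaller than $n^{2n-1}$; matching the stated constant cleanly requires either careful combinatorial estimates of the length (sum of absolute values of integer coefficients) of $D(f)$ as a polynomial in $a_0,\ldots,a_n$, or a Mahler-measure argument using $M(f)\leq\sqrt{n+1}\max_j|a_j|$. The remaining parts---the polynomial representation at finite places and the summation producing (ii)---are routine.
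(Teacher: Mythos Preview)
Your treatment of finite places and the deduction of (ii) from (i) are exactly as in the paper. For archimedean $v$ the paper does not argue directly but cites a result of Lewis and Mahler \cite[p.~335]{LewisMahler} for the inequality $|D(f)|\le n^{2n-1}\max_j|a_j|^{2n-2}$. You correctly identify this constant as the sole obstacle; your Hadamard bound on the cofactors indeed falls short for small $n$ (with the row-norm estimates $\sqrt{n+1}$ and $n^{3/2}$ you record, already $n=2$ overshoots $n^{2n-1}=8$), and the naive Mahler-measure chain $|D(f)|\le n^n M(f)^{2n-2}\le n^n(n+1)^{n-1}\max_j|a_j|^{2n-2}$ likewise misses for small $n$. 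So there is no error in your outline, only a missing citation: the archimedean estimate is imported from Lewis--Mahler rather than proved from scratch.
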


\begin{proof}
Inequality (ii) is an immediate consequence of (i).
For finite $v$, inequality (i) follows from the ultrametric inequality, noting that
$D(f)$ is a homogeneous polynomial of degree $2n-2$ in the coefficients of $f$
with integer coefficients. For infinite $v$, inequality (i) follows from a
a result of Lewis and Mahler \cite[p. 335]{LewisMahler}).
\end{proof}

\begin{lemma}\label{L_bound_hight_by_norm}
Let $K$ be an algebraic number field and $S$ a finite set of places of $K$,
which consists of the infinite places and of the prime ideals $\fp_1\kdots\fp_t$.
Then
for every $\al\in O_S \setminus \{ 0 \}$ and $m\in \NN$ there exists an $S$-unit $\eta \in O_S^*$ with
$$
h(\al\eta^m)\leq \frac{1}{d}\log N_S(\al)+m\cdot \left( cR_K+\frac{h_K}{d}
\log Q_S \right),
$$
where $c:=39d^{d+2}$ and $Q_S:=\prod_{i=1}^t N_K\fp_i$.
\end{lemma}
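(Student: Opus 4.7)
The plan is to reduce the bound to a control on archimedean valuations by a two-step adjustment of $\al$: first multiply by $m$-th powers of principal generators of $\fp_i^{h_K}$ to absorb the finite places in $S$ (producing the $\tfrac{h_K}{d}\log Q_S$ term), then multiply by an $m$-th power of a unit $u\in O_K^*$ to balance the archimedean valuations (producing the $cR_K$ term). Since $\eta$ is to be an $S$-unit, the valuations of $\al\eta^m$ outside $S$ are unchanged, and the $\tfrac{1}{d}\log N_S(\al)$ contribution comes out naturally from the product formula.

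Concretely, write the $O_K$-fractional-ideal factorization $[\al]_K=\fa_0\prod_{i=1}^t\fp_i^{b_i}$ with $\fa_0$ supported outside $S$ and $b_i:=\ord_{\fp_i}(\al)\in\Zz$. Since $h_K$ kills the class group, pick $\pi_i\in O_K$ with $(\pi_i)=\fp_i^{h_K}$, and set $a_i:=\lceil -b_i/(mh_K)\rceil$, so that $0\le b_i+ma_ih_K<mh_K$. Define $\beta:=\al\prod_i\pi_i^{ma_i}$; then $\beta\in O_K$, $\ord_{\fp_i}(\beta)=b_i+ma_ih_K\in[0,mh_K)$ for each $\fp_i\in S$, and $\ord_{\fp}(\beta)\ge 0$ at every other finite prime. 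The product formula, combined with the identity $\sum_{v\notin S}\log|\al|_v=-\log N_S(\al)$, yields
\[
B:=\sum_{v\mid\infty}\log|\beta|_v=\log N_S(\al)+\sum_{i=1}^t(b_i+ma_ih_K)\log N_K\fp_i\le\log N_S(\al)+mh_K\log Q_S.
\]

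Next, invoke the standard explicit unit-reduction lemma: the logarithmic image $\Lambda_K:=\{(\log|u|_v)_{v\mid\infty}:u\in O_K^*\}$ is a full-rank lattice in the hyperplane $H:=\{x\in\Rr^{r_1+r_2}:\sum_v x_v=0\}$, whose $\ell^\infty$-covering radius in $H$ is at most $cR_K$ with $c=39d^{d+2}$ (the explicit constant supplied by known bounds on heights of fundamental units of $O_K^*$). The vector $\bigl(B/(r_1+r_2)-\log|\beta|_v\bigr)_v$ lies in $H$, so there exists $u\in O_K^*$ with $\bigl|\,m\log|u|_v+\log|\beta|_v-B/(r_1+r_2)\,\bigr|\le cmR_K$ for every archimedean $v$. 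Setting $\eta:=u\prod_i\pi_i^{a_i}\in O_S^*$, one has $\al\eta^m=u^m\beta\in O_K$ with all non-archimedean valuations $\ge 0$, and so
\[
d\,h(\al\eta^m)=\sum_{v\mid\infty}\log\max(1,|u^m\beta|_v)\le(r_1+r_2)\bigl(B/(r_1+r_2)+cmR_K\bigr)\le B+dcmR_K,
\]
using $B\ge 0$ and $r_1+r_2\le d$. Dividing by $d$ and inserting the bound on $B$ yields the inequality of the lemma.

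The main technical input is the explicit unit-reduction step: one must exhibit a system of fundamental units of $O_K^*$ whose logarithms span $\Lambda_K$ with a quantitatively small $\ell^\infty$-fundamental domain. The specific constant $c=39d^{d+2}$ is what standard explicit bounds on fundamental unit systems deliver, and this is precisely what fixes the constant appearing in the lemma.
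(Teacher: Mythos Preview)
The paper does not give a proof of this lemma; it simply cites Gy\H{o}ry--Yu \cite[Lemma~3]{GyYu1} (with antecedents in \cite{BugGy2}, \cite{Gy21}). Your outline is exactly the standard argument underlying that cited result: kill the $S$-finite part of $\alpha$ using $m$-th powers of principal generators of $\fp_i^{h_K}$, which produces the $\frac{h_K}{d}\log Q_S$ term, then balance the archimedean vector $(\log|\beta|_v)_{v\mid\infty}$ against its average by an $m$-th power of a global unit, which produces the $cR_K$ term. The computation you give for $B$ via the product formula and the final height estimate are both correct.

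The only point that is not self-contained is the assertion that the $\ell^\infty$-covering radius of the Dirichlet lattice $\Lambda_K\subset H$ is at most $cR_K$ with $c=39d^{d+2}$. This is true, but it is not quite the \emph{raw} form in which the unit lemmas are usually stated: the inputs in \cite{BugGy2}, \cite{GyYu1} are bounds of the type $\prod_j h(\varepsilon_j)\le c_1R_K$ and $\max_j h(\varepsilon_j)\le c_2R_K$ for a suitable fundamental system $\{\varepsilon_j\}$, and one deduces the covering-radius bound from these via $|\log|\varepsilon_j|_v|\le 2d\,h(\varepsilon_j)$ and summing over $j$. If you want the argument to stand on its own you should make that deduction explicit and check that the resulting constant does not exceed $39d^{d+2}$; otherwise you are, like the paper, ultimately appealing to \cite{GyYu1} for the numerical value of $c$.
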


\begin{proof}
This is a slightly weaker version of Lemma 3 of Gy\H ory and Yu \cite{GyYu1}.
The result was essentially proved (with a larger constant)
in \cite{BugGy2} and \cite{Gy21}.
\end{proof}

\begin{lemma}\label{L_lower bound_for hight}
Let $\al$ be a non-zero algebraic number of degree $d$ which is not a root of unity. Then
$$
h(\al) \geq m(d):=
\begin{cases}
\log 2 &\text{if} \quad d = 1, \\
2/d(\log 3d)^3 &\text{if} \quad d \geq 2.
\end{cases}
$$
\end{lemma}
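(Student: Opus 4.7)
The plan is to split the argument into the two cases $d=1$ and $d\geq 2$. The first is elementary; the second is the substantive Dobrowolski--Voutier bound, which I would cite rather than reprove.

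For $d=1$, I would write $\al=p/q\in \Qq^*$ in lowest terms with $q\geq 1$, so that $h(\al)=\log\max(|p|,q)$. Since $\al\in \Qq$ is not a root of unity, we have $\al\neq \pm 1$, hence $\max(|p|,q)\geq 2$ and $h(\al)\geq\log 2$.

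For $d\geq 2$, the stated inequality $h(\al)\geq 2/(d(\log 3d)^3)$ is precisely the main theorem of Voutier's \emph{Acta Arithmetica} paper (1996) refining Dobrowolski, and I would invoke it directly. The underlying strategy, should one wish to sketch it, is the following: reduce to the algebraic-integer case by clearing the leading coefficient of the minimal polynomial of $\al$ and absorbing the loss into $h(\al)$; for a carefully chosen finite set of primes $p$ in a range of size roughly $d(\log 3d)^2$, exploit the Frobenius congruence $\al_i^p\equiv \al_{\sigma_p(i)}\pmod{\fP}$ in the ring of integers of the normal closure of $\Qq(\al)$; construct via Siegel's lemma an auxiliary polynomial $F\in \Zz[X]$ of controlled degree and height vanishing to high order at each conjugate $\al_i$ of $\al$; and then, assuming $h(\al)$ below the claimed threshold, show that a suitable product such as $\prod_{p,i}F(\al_i^p)$ is a nonzero algebraic integer whose archimedean absolute values force its norm to be strictly less than $1$, contradicting integrality unless some $\al_i^{p}=\al_j$, which would make $\al$ a root of unity.

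The main obstacle is thus concentrated entirely in the $d\geq 2$ case: there is no short, self-contained derivation, and the full Voutier argument, with its delicate choice of the averaging set of primes and careful application of Siegel's lemma to produce an auxiliary polynomial vanishing to the right order, is unavoidable. For the purposes of the present paper it is natural simply to accept Voutier's bound as a black box, which is what I would do.
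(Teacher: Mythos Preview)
Your proposal is correct and matches the paper's approach: the paper's proof consists solely of the sentence ``See Voutier \cite{Voutier1}.'' Your treatment is in fact more detailed, since you handle the trivial $d=1$ case explicitly and sketch the Dobrowolski--Voutier auxiliary-polynomial strategy, whereas the paper simply defers the entire statement to the cited reference.
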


\begin{proof}
See Voutier \cite{Voutier1}.
\end{proof}

\subsection{Baker's method}

Let $K$ be an algebraic number field, and denote by $M_K$ the set of places of $K$.
Let $\al_1, \dots ,\al_n$ be  $n\geq 2$ non-zero elements of $K$, and $b_1,\dots ,b_n$
are rational integers, not all zero. Put
\begin{eqnarray*}
&&\Lambda := \al_1^{b_1} \dots \al_n^{b_n}-1,
\\
&&\Theta :=\prod_{i=1}^n \max \big( h(\alpha_i), m(d)\big),
\\
&&B:=\max (3,|b_1|,\kdots |b_n|),
\end{eqnarray*}
where $m(d)$ is the lower bound from Lemma \ref{L_lower bound_for hight}
(i.e., the maximum is $h(\alpha_i)$ unless $\alpha_i$ is a root of unity).
For a place $v \in M_K$, we write
$$
N(v) =
\begin{cases}
2 & \text{if $v$ is infinite} \\
N_K\fp & \text{if $v=\fp$ is finite}.
\end{cases}
$$

\begin{proposition}\label{P_Baker}
Suppose that $\Lambda \ne 0$. Then for $v\in M_K$ we have
\begin{equation}\label{lowerbound-Lambda}
\log |\Lambda|_v > -\, c_1(n, d)\frac{N(v)}{\log N(v)} \Theta\log B ,
\end{equation}
where $c_1(n,d) =  12(16ed)^{3n+2} (\log^* d)^2.$
\end{proposition}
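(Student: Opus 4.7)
The plan is to derive \eqref{lowerbound-Lambda} as a packaging of the known explicit lower bounds for linear forms in logarithms of algebraic numbers, treating the archimedean and non-archimedean cases separately and then combining the resulting constants into the single expression $c_1(n,d) = 12(16ed)^{3n+2}(\log^* d)^2$.

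For an infinite place $v$, I would fix an embedding $\sigma : K \hookrightarrow \Cc$ realizing $v$, choose principal branches of $\log \sigma(\al_i)$, and apply Matveev's theorem on complex linear forms in logarithms to the form $L = b_1 \log \sigma(\al_1)+\cdots + b_n \log \sigma(\al_n)$. When $|\Lambda|_\sigma$ is small, the standard analytic estimate $|L| \leq 2|\Lambda|_\sigma$ converts the Matveev lower bound for $|L|$ into one for $|\Lambda|_\sigma$; when $|\Lambda|_\sigma$ is not small there is nothing to prove. Passing from $|\cdot|_\sigma$ to the normalized $|\cdot|_v$ costs a factor $[K_v:\Qq_v]\leq 2$, which together with the arithmetic factor $N(v)/\log N(v) = 2/\log 2$ is absorbed into $c_1(n,d)$.

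For a finite place $v=\fp$, I would invoke Yu's $\fp$-adic analogue of Baker's theorem, which bounds $\ord_\fp(\Lambda)$ from above by an explicit constant depending on $n$ and $d$ times $\frac{N_K\fp}{(\log N_K\fp)^2} \Theta \log B$. Since $\log|\Lambda|_v = -\ord_\fp(\Lambda)\cdot \log N_K\fp$, this translates directly into a lower bound of the form $\log|\Lambda|_v \geq -c'(n,d) \frac{N(v)}{\log N(v)} \Theta\log B$. The role of the factor $\max(h(\al_i),m(d))$ in $\Theta$ is technical: both Matveev and Yu require heights to be bounded below away from $0$, and when $\al_i$ is a root of unity (so $h(\al_i)=0$) the corresponding term $\al_i^{b_i}$ in $\Lambda$ only alters $\Lambda$ by a root of unity and can be reabsorbed by increasing the other heights, so replacing $h(\al_i)=0$ by $m(d)$ from Voutier's bound is harmless.

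The only genuine work is bookkeeping: one must verify that the explicit constants produced by Matveev in the archimedean case and by Yu in the $\fp$-adic case are both dominated, uniformly in $n\geq 2$ and $d\geq 1$, by $12(16ed)^{3n+2}(\log^* d)^2$. This is done by writing out both constants explicitly, taking the maximum, and estimating standard factors such as $n!$, $(n+1)^{n+1}$ and $\log(ed)$ against $(16ed)^{3n+2}(\log^* d)^2$. The hypothesis $\Lambda\neq 0$ is exactly the non-vanishing condition needed by both theorems, so no separate vanishing analysis is required.
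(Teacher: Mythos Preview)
Your approach is essentially the same as the paper's: apply Matveev's theorem at archimedean places and Yu's $\fp$-adic theorem at non-archimedean places, then check that the explicit constants from both are dominated by $c_1(n,d)$. The one technical point the paper makes explicit that you gloss over is that in the archimedean case the linear form $L=b_1\log\sigma(\al_1)+\cdots+b_n\log\sigma(\al_n)$ need not equal the principal branch of $\log(1+\Lambda)$, so the paper adjoins an extra term $2b_0\log(-1)$ with $b_0\in\Zz$ chosen so that $|\mathrm{Im}\,\Xi|\leq\pi$, and then applies Matveev to the resulting $(n{+}1)$-term form $\Xi$ with $B'\leq 1+nB$.
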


\begin{proof}
First assume that $v$ is infinite. Without loss of generality, we assume
that $K\subset\Cc$ and $|\cdot |_v=|\cdot |^{s(v)}$ where $s(v)=1$ if
$K\subset\Rr$ and $s(v)=2$ otherwise.
Denote by $\log$ the principal natural logarithm on $\Cc$
(with $|{\rm Im}\, \log z|\leq\pi$ for $z\in\Cc^*$.
Let $b_0$ be the rational integer such that $|{\rm Im}\, \Xi |\leq\pi$, where
\[
\Xi :=b_1\log \alpha_1 +\cdots +b_n\log\alpha_n +2b_0\log (-1),\ \ \
\log (-1)=\pi i.
\]
Thus,
\[
B':=\max (|2b_0|,|b_1|\kdots |b_n|)\leq 1+nB.
\]
A result of Matveev \cite[Corollary 2.3]{Matveev1} implies that
\[
\log |\Xi |\geq -\, s(v)^{-1}\big(\mbox{$\frac{1}{2}$}e(n+1)\big)^{s(v)}
(n+1)^{3/2}30^{n+4}d^2(\log ed)\Omega\log (eB'),
\]
where
\[
\Omega :=\pi\prod_{i=1}^n\max (h(\alpha_i),\pi ).
\]
Assuming, as we may, that $|\Lambda |\leq \half$,
we get $|\Xi |=|\log (1+\Lambda )|\leq 2|\Lambda |\leq 1$. Further,
$\Omega\leq \pi^{n+1}m(d)^{-n}\Theta$. By combining this with Matveev's lower bound
we obtain a  lower bound for $|\Lambda |_v$
which is better than \eqref{lowerbound-Lambda}.

Now assume that $v$ is finite, say $v=\fp$, where $\fp$ is a prime ideal of $O_K$.
By a result of K. Yu \cite{Yu1} (consequence of Main Theorem on p. 190) we have
\[
\ord_{\fp}(\Lambda )\leq (16ed)^{2n+2}n^{3/2}\log (2nd)\log (2d)e_{\fp}^n\cdot
\frac{N_K\fp}{(\log N_K\fp )^2}\cdot
\Theta\log B ,
\]
where $e_{\fp}$ is the ramification index of $\fp$.
Using that $\log |\Lambda |_{\fp}=-\ord_{\fp}(\Lambda )\log N_K\fp$ and $e_{\fp}\leq d$,
we obtain a lower bound for $\log |\Lambda |_{\fp}$ which is better
than \eqref{lowerbound-Lambda}.
\end{proof}

\subsection{Thue equations and Pell equations}\label{thue-pell}

Let $K$ be an algebraic number field of degree $d$, discriminant $D_K$, regulator $R_K$ and class number $h_K$, and denote by $O_K$ its ring of integers. Let $S$ be a finite set of places of $K$ containing all infinite places. Denote by $s$ the cardinality of $S$ and by $O_S$ the ring of $S$ integers in $K$. Further denote by $R_S$ the $S$-regulator,
let $\fp_1\kdots\fp_t$ be the prime ideals in $S$, and put
$$
P_S:=\max\{ N_K\fp_1\kdots N_K\fp_t\},\ \ \ Q_S:=N_K(\fp_1\cdots \fp_t),
$$
with the convention that $P_S=Q_S=1$ if $S$ contains no finite places.

We state effective results on Thue equations and on systems of Pell equations
which are easy consequences of a general effective result on decomposable form
equations by Gy\H{o}ry and Yu \cite{GyYu1}. In both results we use the constant
\[
c_1(s,d):= s^{2s+4}2^{7s+60}d^{2s+d+2}.
\]

\begin{proposition}\label{P_Thue}
Let $\beta\in K^*$ and let $F(X,Y)=\sum_{i=0}^n a_iX^{n-i}Y^i\in K[X,Y]$ be a binary form of degree $n\geq 3$
with non-zero discriminant which splits into linear factors over $K$.
Suppose that
\[
\max_{0\leq i\leq n} h(a_i)\leq A,\ \ \ h(\beta )\leq B.
\]
Then for the solutions of
\begin{equation}\label{Thue}
F(x,y)=\beta\ \ \ \mbox{in } x,y\in O_S
\end{equation}
we have
\begin{eqnarray}
\label{Thue-bound}
&&\max (h(x),h(y))
\\
\nonumber
&&
\leq c_1(s,d)n^6P_SR_S\left( 1+\frac{\log^* R_S}{\log^* P_S}\right)\cdot \Big( R_K+\frac{h_K}{d}\log Q_S+ndA+B\Big).
\end{eqnarray}
\end{proposition}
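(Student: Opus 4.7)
The plan is to reduce the Thue equation to a two-term $S$-unit equation via Siegel's identity and then invoke the linear forms in logarithms bound of Proposition \ref{P_Baker}; this is precisely the route followed by Gy\H{o}ry and Yu in \cite{GyYu1}, and the claim here will be a specialization of the general decomposable form equation theorem proved there.

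Without loss of generality assume $a_0 \ne 0$ (otherwise swap the roles of $X$ and $Y$; note $a_n \ne 0$ as well since the discriminant is nonzero). Since $F$ splits in $K[X,Y]$, write
\[
F(X,Y) = a_0 \prod_{i=1}^n (X - \alpha_i Y), \quad \alpha_i \in K \text{ pairwise distinct}.
\]
For a solution $(x,y)\in O_S^2$, set $\lambda_i := x - \alpha_i y$, so $a_0\prod_{i=1}^n \lambda_i = \beta$. Combining this with $h(\alpha_i) \le A + n\log 2$ (from $F = a_0\prod(X-\alpha_i Y)$ and Lemma \ref{L_bound_roots_by_pol}) and \eqref{3.1}, one bounds $N_S(\lambda_i)$ in terms of $A$ and $B$. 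An application of Lemma \ref{L_bound_hight_by_norm} then produces, for each $i$, an $S$-unit $\varepsilon_i \in O_S^*$ such that $\lambda_i = \eta_i \varepsilon_i^{-1}$ with
\[
h(\eta_i) \le c_2(d)\bigl(R_K + (h_K/d)\log Q_S + A + B\bigr),
\]
up to a modest dimension factor.

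Since $n \ge 3$ we may pick three distinct indices; the Siegel identity
\[
(\alpha_k - \alpha_j)\lambda_i + (\alpha_i - \alpha_k)\lambda_j + (\alpha_j - \alpha_i)\lambda_k = 0
\]
rewrites, after dividing by one term, as a unit equation $u + v = 1$ where $u,v \in K^*$ are ratios of $S$-units multiplied by heights-bounded constants. Expressing $\varepsilon_j/\varepsilon_i$ and $\varepsilon_k/\varepsilon_i$ in a fixed system of fundamental $S$-units and applying Proposition \ref{P_Baker} at a suitably chosen place $v \in S$ (chosen to maximize the valuation of $u-1$, which contributes the factor $P_S/\log P_S$) yields a bound of the shape
\[
\max_i h(\varepsilon_i/\varepsilon_1) \le c_3(s,d)\, n^c P_S R_S \Bigl(1 + \tfrac{\log^* R_S}{\log^* P_S}\Bigr)\cdot \bigl(R_K + (h_K/d)\log Q_S + ndA + B\bigr),
\]
where the $R_S(1+\log^* R_S/\log^* P_S)$ factor comes from the standard conversion between logarithms in fundamental $S$-units and heights, cf.\ \cite{GyYu1}. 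Combining with the bound on $h(\eta_i)$ gives $h(\lambda_i)$; finally, solving the linear system $\lambda_1 = x - \alpha_1 y$, $\lambda_2 = x - \alpha_2 y$ for $x,y$ with Cramer's rule, and using $h(\alpha_1 - \alpha_2) \le 2A + O(n)$, produces the claimed bound on $\max(h(x), h(y))$.

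The main technical obstacle is not the reduction, which is textbook, but matching the specific shape of the bound in \eqref{Thue-bound} — in particular, the polynomial dependence $n^6$ and the explicit form of $c_1(s,d)$. These arise from the bookkeeping in Proposition \ref{P_Baker} combined with the bound $\Theta \le \prod \max(h(\varepsilon_i^{(j)}), m(d)) \ll R_S$ for a chosen basis of fundamental $S$-units, together with the $S$-unit representation bound of Lemma \ref{L_bound_hight_by_norm}. Rather than redoing this optimization, the cleanest route is to quote Gy\H{o}ry and Yu's decomposable form equation theorem directly and verify that specializing to the binary case $F(X,Y) = \beta$ over $O_S$ gives the stated constants.
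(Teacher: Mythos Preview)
Your proposal is correct and lands on essentially the same approach as the paper: both defer to Gy\H{o}ry--Yu \cite{GyYu1} rather than redoing the linear-forms bookkeeping. The only difference is that the paper's proof is even terser---it directly quotes \cite[p.~16, Corollary~3]{GyYu1}, which is already stated for Thue equations, and simply checks that the constant there is majorized by $c_1(s,d)n^6$; your sketch of the Siegel-identity reduction is accurate but unnecessary once you cite that corollary.
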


\begin{proof}
Gy\H{o}ry and Yu \cite[p. 16, Corollary 3]{GyYu1}
proved this with instead of our $c_1(s,d)$
a smaller bound $5d^2n^5\cdot 50(n-1)c_1c_3$,
where $c_1,c_3$
are given respectively in \cite[Theorem 1]{GyYu1}, and in
\cite[bottom of page 11]{GyYu1}.
\end{proof}

\begin{proposition}\label{P_Pell}
Let $\gamma_1,\gamma_2,\gamma_3,\beta_{12},\beta_{13}$ be non-zero elements of $K$
such that
\begin{eqnarray*}
&\beta_{12}\not=\beta_{13},\ \ \
\sqrt{\gamma_1/\gamma_2},\, \sqrt{\gamma_1/\gamma_3}\in K,&
\\
&h(\gamma_i)\leq A\ \mbox{for } i=1,2,3,\ \ \ h(\beta_{12}),h(\beta_{13})\leq B.
\end{eqnarray*}
Then for the solutions of the system
\begin{equation}\label{Pell}
\gamma_1x_1^2-\gamma_2x_2^2=\beta_{12},\ \ \gamma_1x_1^2-\gamma_3x_3^2=\beta_{13}\ \ \ \
\mbox{in } x_1,x_2,x_3\in O_S
\end{equation}
we have
\begin{eqnarray}
\label{Pell-bound}
&&\max (h(x_1),h(x_2),h(x_3))
\\
\nonumber
&&
\leq c_1(s,d)P_SR_S\left( 1+\frac{\log^* R_S}{\log^* P_S}\right)
\cdot \Big( R_K+\frac{h_K}{d}\log Q_S+dA+B\Big).
\end{eqnarray}
\end{proposition}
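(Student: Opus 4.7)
The plan is to factor the two Pell-like equations using the given square roots, reduce the system to a single $S$-unit equation, and then apply the Baker-type estimate of Proposition~\ref{P_Baker}. Set $\delta := \sqrt{\gamma_1/\gamma_2}$ and $\epsilon := \sqrt{\gamma_1/\gamma_3}$, both in $K^*$ by hypothesis. The equations become
\[
(\delta x_1 - x_2)(\delta x_1 + x_2) = \beta_{12}/\gamma_2, \qquad
(\epsilon x_1 - x_3)(\epsilon x_1 + x_3) = \beta_{13}/\gamma_3.
\]
Define $u := \delta x_1 - x_2$, $v := \epsilon x_1 - x_3$, and $w := \delta x_3 - \epsilon x_2$. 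A direct computation yields the linear identity $\epsilon u - \delta v = w$, together with the norm-type relation
\[
w(\delta x_3 + \epsilon x_2) = \delta^2 x_3^2 - \epsilon^2 x_2^2 = \frac{\gamma_1}{\gamma_2\gamma_3}(\beta_{12}-\beta_{13}),
\]
which is nonzero since $\beta_{12}\ne\beta_{13}$.

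I would next enlarge $S$ to a finite set $S'$ containing every place at which $2,\delta,\epsilon,\gamma_1,\gamma_2,\gamma_3,\beta_{12},\beta_{13}$ or $\beta_{12}-\beta_{13}$ fails to be a unit. In $O_{S'}$, each of the three pairs $(u,\delta x_1+x_2)$, $(v,\epsilon x_1+x_3)$, $(w,\delta x_3+\epsilon x_2)$ has a product that is an $S'$-unit, so each of $u,v,w$ is itself an $S'$-unit. The identity $\epsilon u-\delta v=w$ then reduces, after dividing by $w$, to the unit equation
\[
\frac{\epsilon u}{w} - \frac{\delta v}{w} = 1
\]
with $\epsilon u/w,\ \delta v/w \in O_{S'}^*$. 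Applying Proposition~\ref{P_Baker} to this unit equation, along the lines of the proof of Proposition~\ref{P_Thue}, bounds $h(\epsilon u/w)$ and $h(\delta v/w)$, and hence $h(u),h(v),h(w)$, by a quantity of the form on the right-hand side of \eqref{Pell-bound}.

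From $u$ one recovers $x_1,x_2$ via
\[
\delta x_1 = \tfrac12\bigl(u + (\beta_{12}/\gamma_2)u^{-1}\bigr), \qquad x_2 = \tfrac12\bigl((\beta_{12}/\gamma_2)u^{-1} - u\bigr),
\]
and an analogous formula using $v$ produces $x_3$; this gives $h(x_i) \le 2h(u) + O(A+B)$, with the obvious modification for $i=3$. The desired bound \eqref{Pell-bound} then follows.

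The main obstacle I anticipate is the bookkeeping around the enlargement $S\to S'$: one must verify that $|S'|$, $P_{S'}$, $Q_{S'}$ and $R_{S'}$ remain bounded in a form compatible with the constant $c_1(s,d)$ in \eqref{Pell-bound}. Since $|S'\setminus S|$ is controlled by the number of prime divisors of the ideal generated by $\gamma_1\gamma_2\gamma_3\beta_{12}\beta_{13}(\beta_{12}-\beta_{13})$ and $2$, bounded in turn via $h(\alpha)\ge d^{-1}\log N_S(\alpha)$ in terms of $A,B,d$, a routine but careful calculation should recover the stated dependence. A secondary technical point is that the heights of $\delta,\epsilon$ themselves must be absorbed into the bound, which is immediate from $h(\delta)=\tfrac12 h(\gamma_1/\gamma_2)\le A$ and similarly for $\epsilon$.
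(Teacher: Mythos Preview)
Your reduction to a unit equation is algebraically correct, but the step ``enlarge $S$ to $S'$'' does \emph{not} recover the stated bound, and this is not just bookkeeping. The number of prime ideals you must adjoin is bounded only by the number of prime divisors of $\gamma_1\gamma_2\gamma_3\beta_{12}\beta_{13}(\beta_{12}-\beta_{13})$, which can be of order $d(A+B)$. Thus $s'=|S'|$ may be as large as $s+Cd(A+B)$. Every Baker-type estimate over $S'$ (whether via Proposition~\ref{P_Baker} applied to a product of fundamental $S'$-units, or via any $S'$-unit-equation bound) involves a constant of the shape $c_1(s',d)$ or $(16ed)^{O(s')}$, and the $S'$-regulator $R_{S'}$ itself picks up a factor $\prod_{\fp\in S'\setminus S}\log N_K\fp$, which can be of size $(d(A+B))^{Cd(A+B)}$. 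The resulting bound is therefore super-exponential in $A$ and $B$, whereas \eqref{Pell-bound} is \emph{linear} in $A$ and $B$ in the last factor and has $c_1(s,d)$ depending only on the original $s$. So the calculation you call ``routine'' in fact fails to close.

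The paper avoids this entirely by a different route: it multiplies the two given equations together with the derived relation $\gamma_2x_2^2-\gamma_3x_3^2=\beta_{13}-\beta_{12}$ to obtain a single decomposable form equation
\[
(\gamma_1X_1^2-\gamma_2X_2^2)(\gamma_1X_1^2-\gamma_3X_3^2)(\gamma_2X_2^2-\gamma_3X_3^2)=\beta_{12}\beta_{13}(\beta_{13}-\beta_{12}),
\]
verifies that the six linear factors (which lie in $K$ thanks to the hypothesis $\sqrt{\gamma_1/\gamma_2},\sqrt{\gamma_1/\gamma_3}\in K$) have rank $3$ and satisfy the connectedness condition, and then applies Theorem~3 of Gy\H{o}ry--Yu \cite{GyYu1} \emph{over the original} $S$. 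That theorem already packages the correct linear dependence on the heights of the coefficients, so no enlargement of $S$ is needed. If you want to push your unit-equation approach through, the fix is not to enlarge $S$ but to write each of $u,v,w$ as (bounded-height element)$\times$($S$-unit) via Lemma~\ref{L_bound_hight_by_norm}, so that Baker is applied with only $s+O(1)$ factors rather than $s'$; this is essentially how such decomposable-form bounds are proved internally.
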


\begin{proof}
Put $\beta_{23}:=\beta_{13}-\beta_{12}$, $\beta :=\beta_{12}\beta_{13}\beta_{23}$
and define
\[
F:=(\gamma_1X_1^2-\gamma_2X_2^2)(\gamma_1X_1^2-\gamma_3X_3^2)(\gamma_2X_2^2-\gamma_3X_3^2).
\]
Thus, every solution of \eqref{Pell} satisfies also
\begin{equation}\label{Pell-decomp}
F(x_1,x_2,x_3)=\beta\ \ \ \mbox{in } x_1,x_2,x_3\in O_S.
\end{equation}
By assumption, $\beta\not= 0$.
Further, $F$ is a decomposable form of degree $6$
with splitting field $K$,
i.e., $F=l_1\cdots l_6$ where $l_1\kdots l_6$ are linear forms
with coefficients in $K$. We make a graph on $\{ l_1\kdots l_6\}$ by connecting two
linear forms $l_i,l_j$ if there is a third linear form $l_k$
such that $l_k=\lambda l_i+\mu l_j$ for certain non-zero $\lambda ,\mu\in K$.
Then this graph is connected.
Further, $\rank\{ l_1\kdots l_6\}=3$.
Hence $F$ satisfies all the conditions of
Theorem 3 of Gy\H{o}ry and Yu \cite{GyYu1}. According to this Theorem,
the solutions $x_1,x_2,x_3$ of \eqref{Pell-decomp}, and so also the solutions
of \eqref{Pell},  satisfy \eqref{Pell-bound}
but with instead of $c_1(s,d)$
the smaller number
$375c_1c_3$,
where $c_1,c_3$
are given respectively in \cite[Theorem 1]{GyYu1}, and on
\cite[bottom of page 11]{GyYu1}.
\end{proof}

\section{Proof of the results in the case of fixed exponent}

Let $K$ be an algebraic number field, put $d:=[K:\Qq ]$, and let
$D_K$ denote the discriminant of $K$.
Further, let $S$ be a finite set of places of $K$ containing all infinite places.

\begin{lemma}\label{L_disc_I}
Let $f(X)\in K[X]$ be a polynomial of degree $n$ and discriminant $D(f)\ne 0$. Suppose that $f$ factorizes over an extension of $K$ as
$a_0(X-\al_1)\dots(X-\al_n)$ and let $L:=K(\al_1, \dots, \al_k)$. Then for the
discriminant of $L$ we have
$$
|D_L|\leq \left( n\cdot e^{h(f)} \right)^{2kn^kd} \cdot |D_K|^{n^k}.
$$
For the case $k=1$ we have the sharper estimate
$$
|D_L|\leq n^{(2n-1)d} \cdot e^{(2n-2)d\cdot h(f)} \cdot |D_K|^{[L:K]}.
$$
\end{lemma}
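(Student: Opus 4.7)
The plan is to prove the sharper $k=1$ estimate first, then derive the general bound by induction on $k$ using the tower $K = L_0 \subset L_1 \subset \cdots \subset L_k = L$ with $L_j := K(\alpha_1,\ldots,\alpha_j)$. The conductor-discriminant relation $|D_L|=N_K(\fd_{L/K})\cdot|D_K|^{[L:K]}$ reduces everything to bounding $N_K(\fd_{L/K})$.

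For $k=1$, Lemma \ref{L_gen_disc_I} gives $\fd_{L/K}\supseteq [D(f)]_K/[f]_K^{2n-2}$. Containment of fractional ideals translates into divisibility, hence into the inequality $N_K(\fd_{L/K})\leq N_K([D(f)]_K)\cdot N_K([f]_K)^{-(2n-2)}$. I would unpack both factors as products over places: $N_K([D(f)]_K)=\prod_{v\ \text{inf}}|D(f)|_v$ (the product formula applied to the principal ideal $(D(f))$), and $N_K([f]_K)^{-1}=\prod_{v\ \text{fin}}\max_i|a_i|_v$ (from the definition of the fractional ideal generated by the coefficients). At each infinite place I apply Lemma \ref{disc-height}(i) to obtain $|D(f)|_v\leq n^{(2n-1)s(v)}\max_i|a_i|_v^{2n-2}$. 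Since $\sum_{v\ \text{inf}}s(v)=d$, the $n$-factors collect to $n^{(2n-1)d}$, while the infinite-place contributions $\max_i|a_i|_v^{2n-2}$ combine with the finite-place factor into the single product $\prod_{v\in M_K}\max_i|a_i|_v^{2n-2}$. The product formula $\prod_v|a_j|_v=1$ together with $\max_i|a_i|_v\leq\max(1,|a_0|_v,\ldots,|a_n|_v)$ yields $\prod_v\max_i|a_i|_v\leq e^{dh(f)}$, so $N_K(\fd_{L/K})\leq n^{(2n-1)d}e^{(2n-2)dh(f)}$, which gives the sharper estimate.

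For the induction step I apply the $k=1$ bound with base field $L_{k-1}$ (of degree $d_{k-1}\leq dn^{k-1}$ over $\Qq$) and extension $L_k=L_{k-1}(\alpha_k)$ (of degree at most $n$), noting that $f$ remains square-free over $L_{k-1}$ and that $h(f)$ is independent of the base field. This yields $|D_{L_k}|\leq n^{(2n-1)d_{k-1}}e^{(2n-2)d_{k-1}h(f)}|D_{L_{k-1}}|^{[L_k:L_{k-1}]}$. Substituting the inductive hypothesis for $|D_{L_{k-1}}|$ and using $d_{k-1}\leq dn^{k-1}$, $[L_k:L_{k-1}]\leq n$, the exponent of $ne^{h(f)}$ becomes $2n\cdot dn^{k-1}+n\cdot 2(k-1)n^{k-1}d=2kn^kd$ and the exponent of $|D_K|$ becomes $n\cdot n^{k-1}=n^k$, exactly as required.

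The main obstacle is the $k=1$ case: one must pass cleanly from the ideal inclusion in Lemma \ref{L_gen_disc_I}---which involves the potentially non-integral fractional ideal $[f]_K$---to an inequality expressed purely in terms of $h(f)$. The product-formula identity that merges the infinite-place norm of $D(f)$ with the finite-place factor $N_K([f]_K)^{-(2n-2)}$ into a single bound $\prod_v\max_i|a_i|_v^{2n-2}\leq e^{(2n-2)dh(f)}$ is the decisive step; once this is in place, the rest is routine bookkeeping.
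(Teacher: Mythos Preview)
Your $k=1$ argument is essentially identical to the paper's: both invoke Lemma~\ref{L_gen_disc_I}, bound $|D(f)|_v$ at the infinite places by Lemma~\ref{disc-height}(i), merge the infinite and finite contributions via the product formula, and arrive at $N_K(\fd_{L/K})\leq n^{(2n-1)d}e^{(2n-2)d\,h(f)}$.

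For general $k$ the two proofs diverge in organization but not in substance. The paper writes $L=K(\alpha_1)\cdots K(\alpha_k)$ as a compositum and applies Lemma~\ref{L_gen_disc_extensions}(ii) to get
\[
\fd_{L/K}\supseteq\prod_{i=1}^k\fd_{K(\alpha_i)/K}^{[L:K(\alpha_i)]},
\]
then inserts the $k=1$ bound for each factor together with $[L:K(\alpha_i)]\leq(n-1)\cdots(n-k+1)\leq n^{k-1}$. You instead climb the tower $L_0\subset L_1\subset\cdots\subset L_k$ one step at a time, applying the $k=1$ estimate over the moving base field $L_{j-1}$ (this is Lemma~\ref{L_gen_disc_extensions}(i) in disguise). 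Both routes land on exactly the same exponent $2kn^kd$ after the same crude degree bound $\leq n$ at each stage. Your inductive version has the small advantage of not needing the compositum inequality (ii), while the paper's version makes it transparent that only the individual extensions $K(\alpha_i)/K$ matter and not their order. Either way the argument is correct.
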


\begin{proof}
By Lemma \ref{L_gen_disc_extensions} (i), we have
\begin{equation}\label{disc_0}
|D_L|=N_K\fd_{L/K} \cdot |D_K|^{[L:K]}\leq N_K\fd_{L/K} \cdot |D_K|^{n^k}.
\end{equation}
Applying Lemma \ref{L_gen_disc_extensions} (ii) to $L=K(\alpha_1)\cdots K(\alpha_k)$
yields
\begin{equation}\label{disc_1}
\fd_{L/K} \supseteq \prod_{i=1}^k \left( \fd_{K(\al_i)/K}\right)^{[L:K(\al_i)]}.
\end{equation}
Further, since $\al_i$ is a root of $f$ we have by Lemma \ref{L_gen_disc_I},
$$
\fd_{K(\al_i)/K} \supseteq \frac{[D(f)]}{[f]^{2n-2}},
$$
and so
\begin{equation}\label{disc_2}
N_K\fd_{K(\al_i)/K}\leq  N_K\left( \frac{[D(f)]}{[f]^{2n-2}} \right).
\end{equation}
By Lemma \ref{disc-height} we have
\begin{eqnarray*}
|N_K(D(f))|&=&\prod_{v \in M_K^{\infty}} |D(f)|_v \leq \prod_{v \in M_K^{\infty}} \left(n^{2n-1}\right)^{s(v)}|f|_v^{2n-2}
\\
&\leq& n^{(2n-1)d}\prod_{v \in M_K^{\infty}} |f|_v^{2n-2}
\end{eqnarray*}
where $|f|_v$ is the maximum of the $v$-adic absolute values of the coefficients of $f$;
moreover,
$$
N_K([f]^{-2n+2})=\prod_{v\in M_K \setminus M_K^{\infty}} |f|_v^{2n-2}.
$$
Thus, we obtain
\begin{equation}\label{disc_3}
 N_K\left( \frac{[D(f)]}{[f]^{2n-2}} \right)  \leq \left( n^{2n-1} \cdot e^{(2n-2)h(f)} \right)^{d}.
\end{equation}
Together with \eqref{disc_0}, \eqref{disc_2} this implies
the sharper upper bound for $|D_L|$ in the case $k=1$.
For arbitrary $k$, combining \eqref{disc_1}, \eqref{disc_2}, \eqref{disc_3}
and the estimate $[L:K(\al_i)] \leq (n-1)(n-2)\cdots (n-k+1)$ gives
$$
\begin{aligned}
N_K\fd_{L/K} &\leq  \left( n^{2n-1} \cdot e^{(2n-2)h(f)} \right)^{k(n-1)(n-2)\cdots (n-k+1)d} \\
&\leq n^{k(2n-1)n^{k-1}d} \cdot e^{k(2n-2)n^{k-1}d\cdot h(f)} \leq \left( n\cdot e^{h(f)} \right)^{2kn^kd}.
\end{aligned}
$$
This in turn, together with \eqref{disc_0} proves Lemma \ref{L_disc_I}.
\end{proof}

Let
\[
f=a_0X^n+a_1X^{n-1}+\dots+a_n\in O_S[X]
\]
be a polynomial of degree $n\geq 2$ with discriminant $D(f)\not= 0$.
Let $b$ be a
non-zero element of $O_S$, $m$ an integer $\geq 2$ and consider the equation
\begin{equation}\label{gen_ell}
f(x)=by^m \quad \quad \text{in $x,y\in O_S$. }
\end{equation}
Put
\begin{equation}\label{h-hat}
\widehat{h}:=\frac{1}{d}\sum_{v \in M_K} \log \max (1,|b|_v,|a_0|_v\kdots |a_n|_v).
\end{equation}
Let $G$ be the splitting field of $f$ over $K$. Then
\[
f=a_0(X-\alpha_1)\cdots (X-\alpha_n)\ \
\mbox{with } \alpha_1\kdots\alpha_n\in G.
\]
For $i=1\kdots n$, let $L_i=K(\alpha_i)$ and denote by $T_i$ the set
of places of $L_i$ lying above the places of $S$.
We denote by $[\beta_1\kdots\beta_r]_{T_i}$ the
fractional of $O_{T_i}$ generated by $\beta_1\kdots \beta_r$.
Then we have the following Lemma:

\begin{lemma}\label{L_gen_ell}
Let $x,y \in O_S$ be a solution of equation \eqref{gen_ell} with $y\not= 0$.
Then for $i=1\kdots n$ we have the following:
\\
(i) There are ideals $\fC_i$, $\fA_i$ of $O_{T_i}$ such that
\begin{equation}\label{split_to_ideal_powers}
[a_0(x-\alpha_i )]_{T_i}=\fC_i\fA_i^m,\ \ \ \fC_i\supseteq [a_0bD(f)]_{T_i}^{m-1}.
\end{equation}
(ii) There are $\ga_i,\, \xi_i$ with
\begin{equation}\label{split_to_powers}
\left\{\begin{array}{l}
x-\al_i =\ga_i\xi_i^m,\ \ \ga_i\in L_i^*,\, \xi\in O_{T_i},
\\[0.2cm]
h(\ga_i) \leq m(n^3d)^{nd}e^{2nd\widehat{h}}|D_K|^n\cdot
\Big( 80(dn)^{dn+2}+\frac{1}{d}\log Q_S\Big).
\end{array}\right.
\end{equation}
\end{lemma}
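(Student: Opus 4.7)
The plan is to prove (i) and (ii) in turn.

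\textbf{Part (i).} I would work in the splitting field $G$ of $f$ over $K$, denoting by $T_G$ the set of places of $G$ above $S$. Since each $a_0\alpha_j$ is integral over $O_S$, the elements $\beta_j:=a_0(x-\alpha_j)$ lie in $O_{T_G}$. Multiplying $f(x)=by^m$ by $a_0^{n-1}$ gives
\[
\beta_1\beta_2\cdots\beta_n \;=\; a_0^{n-1}b\,y^m,
\]
while the standard discriminant identity yields
\[
\prod_{i<j}\bigl(a_0(\alpha_i-\alpha_j)\bigr)^2 \;=\; a_0^{(n-1)(n-2)}\,D(f).
\]
Fix $i$ and let $\fP$ be a prime of $O_{L_i}$ outside $T_i$, with $\fQ$ a prime of $O_G$ above $\fP$. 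If $\ord_{\fQ}(a_0 b D(f))=0$, the second identity (combined with $a_0(\alpha_i-\alpha_j)\in O_{T_G}$) forces $\ord_{\fQ}(a_0(\alpha_i-\alpha_j))=0$ for every $j\ne i$; hence whenever $\ord_{\fQ}(\beta_i)>0$, the relations $\beta_j=\beta_i+a_0(\alpha_j-\alpha_i)$ give $\ord_{\fQ}(\beta_j)=0$ for $j\ne i$, so from the multiplicative identity $\ord_{\fQ}(\beta_i)=m\,\ord_{\fQ}(y)$, whence $m\mid\ord_{\fP}(\beta_i)$ (descending from $\fQ$ to $\fP$ uses $y\in K\subseteq L_i$). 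Now define integral ideals $\fA_i,\fC_i$ of $O_{T_i}$ by
\[
\ord_{\fP}(\fA_i)=\lfloor\ord_{\fP}(\beta_i)/m\rfloor,\qquad
\ord_{\fP}(\fC_i)=\ord_{\fP}(\beta_i)-m\,\ord_{\fP}(\fA_i)\in\{0,\ldots,m-1\}.
\]
Then $[a_0(x-\alpha_i)]_{T_i}=\fC_i\fA_i^m$; the previous paragraph gives $\ord_{\fP}(\fC_i)=0$ whenever $\ord_{\fP}([a_0bD(f)]_{T_i})=0$, while $\ord_{\fP}(\fC_i)\le m-1\le(m-1)\ord_{\fP}([a_0bD(f)]_{T_i})$ otherwise, so $\fC_i\supseteq[a_0bD(f)]_{T_i}^{m-1}$.

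\textbf{Part (ii).} Via a Minkowski-type argument in $O_{T_i}$, select an integral ideal $\fB_i$ of $O_{T_i}$ in the ideal class $[\fA_i]^{-1}$ with $N_{T_i}(\fB_i)\le|D_{L_i}|^{1/2}$, so that $\fA_i\fB_i=(\xi_i^{(0)})$ for some $\xi_i^{(0)}\in O_{T_i}$. Setting $\gamma_i^{(0)}:=(x-\alpha_i)/(\xi_i^{(0)})^m\in L_i^*$, one computes $[\gamma_i^{(0)}]_{T_i}=[a_0]_{T_i}^{-1}\fC_i\fB_i^{-m}$, whose denominator has $T_i$-norm at most $N_{T_i}(\fB_i)^m$ up to a factor controlled by $h(a_0)\le\widehat h$. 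After clearing this denominator and invoking Lemma \ref{L_bound_hight_by_norm} in $L_i$, we produce a $T_i$-unit $\eta$ such that $\gamma_i:=\gamma_i^{(0)}\eta^{-m}$ and $\xi_i:=\xi_i^{(0)}\eta\in O_{T_i}$ satisfy
\[
h(\gamma_i)\;\le\;\frac{m}{[L_i:\Qq]}\log N_{T_i}(\fB_i)
+m\Bigl(cR_{L_i}+\frac{h_{L_i}}{[L_i:\Qq]}\log Q_{T_i}\Bigr),
\]
with $c=39[L_i:\Qq]^{[L_i:\Qq]+2}$. It remains to insert the estimates $[L_i:\Qq]\le nd$; $Q_{T_i}\le Q_S^n$ by \eqref{bound-3}; $|D_{L_i}|\le n^{(2n-1)d}e^{(2n-2)d\widehat h}|D_K|^n$ by the sharper $k=1$ case of Lemma \ref{L_disc_I} (using $h(f)\le\widehat h$); and $h_{L_i}R_{L_i}\le|D_{L_i}|^{1/2}(\log^*|D_{L_i}|)^{nd-1}$, $R_{L_i}\ge 0.2$ from Lemma \ref{L_hR}. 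A careful simplification then produces the stated bound.

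\textbf{Main obstacle.} The hard part is the bookkeeping in (ii): one must marshal the Minkowski norm estimate, the factor-of-$m$ regulator term, the explicit $c=39(nd)^{nd+2}$, and the discriminant and class-number bounds into the specific shape $m(n^3d)^{nd}e^{2nd\widehat h}|D_K|^n\bigl(80(dn)^{dn+2}+\tfrac{1}{d}\log Q_S\bigr)$, while correctly handling the non-integrality of $\gamma_i^{(0)}$ when applying the height-reduction lemma.
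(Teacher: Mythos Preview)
Your Part (i) is correct and is essentially the paper's argument, recast in terms of the elements $\beta_j=a_0(x-\alpha_j)$ rather than the fractional ideals $[x-\alpha_j]/[1,\alpha_j]$; where you invoke the identity $\prod_{i<j}\bigl(a_0(\alpha_i-\alpha_j)\bigr)^2=a_0^{(n-1)(n-2)}D(f)$, the paper uses Gauss' Lemma and the containment of $[D(f)]/[f]^{2n-2}$ in the ideal sum, but the content is the same.

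In Part (ii) there is a genuine gap, which you flag as your ``main obstacle'' but do not resolve. Your displayed intermediate bound
\[
h(\gamma_i)\;\le\;\frac{m}{[L_i:\Qq]}\log N_{T_i}(\fB_i)
+m\Bigl(cR_{L_i}+\frac{h_{L_i}}{[L_i:\Qq]}\log Q_{T_i}\Bigr)
\]
omits the contribution of $\fC_i$, whose $T_i$-norm may be as large as $N_{T_i}(a_0bD(f))^{m-1}$. More fundamentally, Lemma~\ref{L_bound_hight_by_norm} applies only to $\alpha\in O_{T_i}$, and ``clearing the denominator'' of $\gamma_i^{(0)}$ would require an element of controlled \emph{height}, not merely controlled norm, which your Minkowski step does not provide. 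The paper sidesteps both issues by writing $a_0(x-\alpha_i)=(\gamma_1/\gamma_2)\xi_1^m$ with $\gamma_1,\gamma_2\in O_{T_i}$: one picks $\xi_1\in\fA_i$ and $\gamma_1\in\fC_i$ via Minkowski so that $[\xi_1]=\fB\fA_i$, $[\gamma_1]=\fD\fC_i$ with $N_{T_i}(\fB),N_{T_i}(\fD)\le|D_{L_i}|^{1/2}$, whence $[\gamma_2]=\fD\fB^m$, and then applies Lemma~\ref{L_bound_hight_by_norm} \emph{separately} to $\gamma_1$ and $\gamma_2$. This yields
\[
h(\gamma)\le h(a_0)+d_{L_i}^{-1}\Bigl(\tfrac{m+1}{2}\log|D_{L_i}|+m\log N_{T_i}(a_0bD(f))\Bigr)+2m\Bigl(cR_{L_i}+\tfrac{h_{L_i}}{d_{L_i}}\log Q_{T_i}\Bigr),
\]
after which the estimates you list (together with $d_{L_i}^{-1}\log N_{T_i}(a_0bD(f))\le(2n-1)\log n+2n\widehat h$ from Lemma~\ref{disc-height}) give the stated bound. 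The missing $N_{T_i}(\fC_i)$ term is only $O(mn\widehat h)$ and is absorbed in the final exponential-in-$\widehat h$ expression, but your argument as written does not account for it, and the single application of the height-reduction lemma you propose cannot be carried out on the non-integral $\gamma_i^{(0)}$.
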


\begin{proof}
It suffices to prove the Lemma for $i=1$.
We suppress the index $1$ and write $\alpha ,T,L,\gamma,\xi$
for $\alpha_1,T_1,L_1,\gamma_1,\xi_1$.
Let  $g:=(X-\al_2)\dots (X-\al_n)$. By $[\cdot ]$ we denote fractional ideals
in $G$ with respect to the integral closure of $O_T$ in $G$. Clearly,
\[
\frac{[x-\alpha]}{[1,\alpha]}+\frac{[x-\alpha_i]}{[1,\alpha_i]}\supseteq
\frac{[\alpha -\alpha_i]}{[1,\alpha ][1,\alpha_i]}
\]
for $i=2\kdots n$. This implies
\[
\frac{[x-\alpha]}{[1,\alpha]}+\prod_{i=2}^n\frac{[x-\alpha_i]}{[1,\alpha_i]}\supseteq
\prod_{i=2}^n \frac{[\alpha -\alpha_i]}{[1,\alpha ][1,\alpha_i]}
\]
Noting that by Gauss' Lemma we have $[f]=[a_0]\prod_{i=1}^n [1,\alpha_i]$,
we see that the right-hand side contains
\[
\prod_{j=1}^n\prod_{i\not= j}\frac{[\alpha_j-\alpha_i]}{[1,\alpha_j][1,\alpha_i]}
=\frac{[D(f)]}{[f]^{2n-2}}.
\]
Using also $[g]=\prod_{i=2}^n [1,\alpha_i]$ we obtain
\begin{equation}\label{g-sum}
\frac{[x-\alpha]}{[1,\alpha]}+\frac{[g(x)]}{[g]}\supseteq \frac{[D(f)]}{[f]^{2n-2}}.
\end{equation}
Writing equation \eqref{gen_ell} as equation of ideals, we get
\begin{equation}\label{g-product}
[b][f]^{-1}[y]^m=\frac{[x-\alpha]}{[1,\alpha]}\cdot \frac{[g(x)]}{[g]}.
\end{equation}
Note that the ideals occurring in \eqref{g-sum}, \eqref{g-product}
are all defined over $L$, so we may view them as ideals of $O_T$.
Henceforth, we use $[\cdot ]$ to denote ideals of $O_T$.

Now let $\fP$ be a prime ideal of $O_T$ not dividing $a_0bD(f)$.
Note that $D(f)\in [f]^{2n-2}$, hence $\fP$ does not divide $[f]$ either.
By \eqref{g-sum}, the prime ideal
$\fP$ divides at most one of the ideals $\frac{[x-\al_1]}{[1,\al_1]}$
and $\frac{[g(x)]}{[g]}$, and we get
$$
\ord_{\fP} \frac{[x-\al ]}{[1,\al ]} \equiv 0 \pmod m.
$$
But $[a_0][1,\alpha ]$ is not divisible by $\fP$ since it contains $a_0$.
Hence
$$
\ord_{\fP}(a_0(x-\al )) \equiv 0 \pmod m.
$$
Applying division with remainder to the exponents of the prime ideals
dividing $a_0bD(f)$ in the factorization of $a_0(x-\alpha )$,
we obtain
that there are ideals $\fC$, $\fA$ of $O_T$, with $\fC$ dividing
$(ba_0D(f))^{m-1}$ such that $[a_0(x-\al )]=\fC\fA^m$.
This proves (i).

We prove (ii).
The ideal $\fA$ of $O_T$ may be written as $\fA=\fA^* O_T$ with an ideal $\fA^*$
of $O_L$ composed of prime ideals outside $T$, and further, we may choose
non-zero $\xi_1\in\fA^*$ with $|N_{L/\Qq}(\xi_1 )|\leq |D_L|^{1/2}N_L\fA^*$
(see Lang \cite[pp. 119/120]{Lang70}.
This implies $N_T(\xi_1 )\leq |D_L|^{1/2}N_T\fA$, i.e.,
$[\xi_1 ]=\fB\fA$ where $\fB$ is an ideal of $O_T$ with $N_T\fB\leq |D_L|^{1/2}$.
Similarly, there exists $\gamma_1\in L$ with $[\gamma_1]=\fD\fC$, where
$\fD$ is an ideal of $O_T$ with $N_T\fD\leq |D_L|^{1/2}$.
As a consequence, we have
\[
a_0(x-\al )=\frac{\gamma_1}{\gamma_2}\xi_1^m,
\]
where $\gamma_1,\gamma_2\in O_T$, and
\[
[\gamma_2]=\fD\fB^m.
\]
Using (i) and the choice of $\fB$, $\fD$, we get
\begin{equation}\label{bound_norm}
N_T(\gamma_1)\leq |D_L|^{1/2}N_T(a_0bD(f))^{m-1},\ \ \ N_T(\gamma_2)\leq |D_L|^{(m+1)/2}.
\end{equation}
According to
Lemma \ref{L_bound_hight_by_norm} we can find $T$-units $\eta_1,\eta_2\in O_T^*$ such that
\[
h(\gamma_i\eta_i^m) \leq d_L^{-1}\log N_T(\gamma_i)+m\cdot \left( cR_L+\frac{h_L}{d_L} \log Q_T \right)\ \,\mbox{for } i=1,2
\]
where $d_L=[L:\Qq ]$, $c:=39d_L^{d_L+2}$ and $Q_T:=\prod\limits_{\ketsor{\fP\in T}{\fP \text{ finite}}} N_L\fP$.
Putting
\[
\gamma :=a_0^{-1}\gamma_1\gamma_2^{-1}(\eta_1\eta_2^{-1})^m,\ \ \
\xi = \eta_2\eta_1^{-1}\xi_1,
\]
and invoking \eqref{bound_norm}
we obtain $x-\al =\gamma\xi^m$, with $\xi\in O_T$, $\gamma\in L^*$ and
\begin{eqnarray}\label{bound_heightgamma}
h(\gamma )&\leq& h(a_0)+d_L^{-1}\Big(\frac{m+1}{2}\log |D_L|+m\log N_T(abD(f))\Big)+
\\
\nonumber
&&\qquad
+2m\cdot \Big( cR_L+\frac{h_L}{d_L} \log Q_T \Big).
\end{eqnarray}

It remains to estimate from above the right-hand side of \eqref{bound_heightgamma}.
First, we have by \eqref{norm-relation} and Lemma \ref{disc-height},
\begin{eqnarray}\label{a0bD(f)}
d_L^{-1}\log N_T(a_0bD(f))&=&d^{-1}\log N_S(a_0bD(f))\leq h(a_0bD(f))
\\
\nonumber
&\leq& (2n-1)\log n +2n\widehat{h}.
\end{eqnarray}
Together with Lemma \ref{L_disc_I} this implies
\begin{eqnarray}\label{bound-1}
&&h(a_0)+d_L^{-1}\Big(\frac{m+1}{2}\log |D_L|+m\log N_T(abD(f))\Big)
\\
\nonumber
&&\qquad \leq m(4n\log n +4n\widehat{h}+\log |D_K|).
\end{eqnarray}
Next, by Lemma \ref{L_hR}, Lemma \ref{L_disc_I} and $d_L\leq nd$ we have
\begin{eqnarray}\label{bound-2}
\max (h_L,R_L)
&\leq& 5|D_L|^{1/2}(\log^* |D_L|)^{nd-1}\leq (nd)^{nd}|D_L|
\\
\nonumber
&\leq& (n^3d)^{nd}e^{(2n-2)d\widehat{h}}|D_K|^n.
\end{eqnarray}
By inserting the bounds \eqref{bound-1}, \eqref{bound-2},
together with \eqref{bound-3} and the estimate
$c\leq 39(nd)^{nd+2}$ into \eqref{bound_heightgamma},
one easily obtains the upper bound for $h(\gamma )$ given by (ii).
\end{proof}

Let $f$, $b$, $m$ be as above, and let $x,y\in O_S$ be a solution of
\eqref{gen_ell} with $y\not= 0$.
Let $\gamma_1\kdots\gamma_n$, $\xi_1\kdots\xi_n$ be as in Lemma \ref{L_gen_ell}.

\begin{lemma}\label{L_discresult}
(i) Let $m\geq 3$ and
$M=K(\alpha_1,\alpha_2,\root\m\of{\gamma_1/\gamma_2} ,\rho )$,
where $\rho$ is a primitive $m$-th root of unity. Then
\begin{equation}\label{discestimate-1}
|D_M|\leq 10^{m^3n^2d}n^{4m^2n^3d}|D_K|^{m^2n^2}Q_S^{m^2n^2}e^{4m^2n^3d\widehat{h}}.
\end{equation}
(ii) Let $m=2$ and
$M=K(\alpha_1,\alpha_2,\alpha_3,\sqrt{\gamma_1/\gamma_2},\sqrt{\gamma_1/\gamma_3})$.
Then
\begin{equation}\label{discestimate-2}
|D_M|\leq n^{40n^4d}Q_S^{8n^3}|D_K|^{4n^3}e^{25n^4d\widehat{h}}.
\end{equation}
\end{lemma}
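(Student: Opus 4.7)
The plan is to construct $M$ step-by-step as a tower over $K$, apply the tower formula for discriminants (Lemma~\ref{L_gen_disc_extensions}(i)) at each step, and bound each relative discriminant by appropriate means: Lemma~\ref{L_disc_I} for subfields generated by the $\alpha_i$, Lemma~\ref{L_gen_disc_I} applied to $X^m-1$ for the cyclotomic step (only in case (i)), and an ideal-theoretic argument for the radical extensions. It is essential \emph{not} to use $h(\gamma_i)$ as a direct input: a naive application of Lemma~\ref{L_gen_disc_I} to $X^m-\gamma_1/\gamma_2$, combined with the height bound of Lemma~\ref{L_gen_ell}(ii), would produce a bound doubly exponential in $\widehat{h}$ (that bound contains the factor $e^{2nd\widehat{h}}$), whereas the target is only singly exponential.

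For part (i), take $K\subseteq K(\alpha_1,\alpha_2)=:K_1\subseteq K_1(\rho)=:K_2\subseteq M$. Lemma~\ref{L_disc_I} with $k=2$ bounds $|D_{K_1}|$. Since $X^m-1$ is squarefree with $D(X^m-1)=\pm m^m$ and coefficient ideal $O_{K_1}$, Lemma~\ref{L_gen_disc_I} yields $\fd_{K_2/K_1}\supseteq [m^m]_{K_1}$, so $N_{K_1}(\fd_{K_2/K_1})\leq m^{m[K_1:\Qq]}$. For $M/K_2$, the key observation is that $M=K_2\bigl(\sqrt[m]{(x-\alpha_1)/(x-\alpha_2)}\bigr)$, since $\gamma_1/\gamma_2$ and $(x-\alpha_1)/(x-\alpha_2)$ differ by the perfect $m$-th power $(\xi_2/\xi_1)^m$. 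Using the factorization $[a_0(x-\alpha_i)]_{T_i}=\fC_i\fA_i^m$ with $\fC_i\supseteq [a_0bD(f)]_{T_i}^{m-1}$ from Lemma~\ref{L_gen_ell}(i), I verify that $\ord_\fP\bigl((x-\alpha_1)/(x-\alpha_2)\bigr)\equiv 0\pmod m$ for every prime $\fP$ of $K_2$ with $\fP\notin T_{K_2}$ and $\fP\nmid a_0bD(f)$. For such $\fP$ also coprime to $m$, Lemma~\ref{L_gen_disc_primes_excluded} ensures $M/K_2$ is unramified at $\fP$; for the remaining ``bad'' primes (above $m$, above $a_0bD(f)$, or in $T_{K_2}$), Lemma~\ref{L_gen_disc_primedivisors} gives $\ord_\fP(\fd_{M/K_2})\leq m(1+\ord_\fP(u(m)))$. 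Summing, using $\log Q_{T_{K_2}}\leq [K_2:K]\log Q_S$, $h(a_0bD(f))\leq 2n\widehat{h}+(2n-1)\log n$ (from Lemma~\ref{disc-height}), and $\log u(m)\leq m$, yields a bound for $N_{K_2}(\fd_{M/K_2})$ which is \emph{linear} in $\widehat{h}$, $\log Q_S$, $\log|D_K|$.

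Part (ii) follows the same scheme with $K_1=K(\alpha_1,\alpha_2,\alpha_3)$ (Lemma~\ref{L_disc_I} with $k=3$); no cyclotomic step is needed since $-1\in K$, and the two quadratic radical extensions $K_1\subseteq K_1(\sqrt{\gamma_1/\gamma_2})\subseteq M=K_1(\sqrt{\gamma_1/\gamma_2},\sqrt{\gamma_1/\gamma_3})$ are handled by exactly the same ideal-theoretic method. The main obstacle will be the careful bookkeeping in the radical step: tracking contributions from the three types of bad primes (above $m$, above $a_0bD(f)$, and in $T_{K_i}$), combining them with the intermediate discriminant bounds via the tower formula, and consolidating the resulting exponents of $n,d,m,|D_K|,Q_S,e^{\widehat{h}}$ into the compact forms \eqref{discestimate-1} and \eqref{discestimate-2}.
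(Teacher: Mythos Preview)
Your proposal is correct and follows essentially the same approach as the paper. Both proofs hinge on the same key idea: avoid using $h(\gamma_i)$ directly, and instead use Lemma~\ref{L_gen_ell}(i) to show that the only primes at which the radical extension can ramify are those above $m\cdot a_0bD(f)$ or in $S$, then bound their contribution via Lemma~\ref{L_gen_disc_primedivisors}. The only organizational difference is that the paper builds $M$ as a \emph{compositum} over $L=K(\alpha_1,\alpha_2)$ (resp.\ $K(\alpha_1,\alpha_2,\alpha_3)$), setting $M_1=L(\sqrt[m]{\gamma_1/\gamma_2})$ and $M_2=L(\rho)$ (resp.\ $M_2=L(\sqrt{\gamma_1/\gamma_3})$) and applying Lemma~\ref{L_gen_disc_extensions}(ii), whereas you use a strict \emph{tower} and apply Lemma~\ref{L_gen_disc_extensions}(i) twice; the resulting estimates are equivalent up to harmless constants. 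One small remark: your claimed bound $\log u(m)\leq m$ is not quite safe for all $m$ (the paper uses $u(m)\leq 4^m$ from Rosser--Schoenfeld), but this only affects a constant factor and does not disturb the final inequalities.
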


\begin{proof}
We start with (i). Define the fields $L=K(\alpha_1,\alpha_2)$,
$M_1=L(\root\m\of{\gamma_1/\gamma_2})$, $M_2=L(\rho )$. Then $M=M_1M_2$.
By Lemma \ref{L_gen_disc_extensions} (i) we have
\begin{equation}\label{discrelation-1}
|D_M|=N_L\fd_{M/L}|D_L|^{[M:L]}.
\end{equation}
By Lemma \ref{L_gen_disc_I}, we have
$\fd_{M_2/L}\supseteq [m]^m$, where $[m]=mO_L$.
Together with
Lemma \ref{L_gen_disc_extensions} (ii), this implies
\[
\fd_{M/L}\supseteq\fd_{M_1/L}^{[M:M_1]}\fd_{M_2/L}^{[M:M_2]}
\supseteq m^{m^2}\fd_{M_1/L}^m.
\]
Inserting this into \eqref{discrelation-1}, noting that $[L:\Qq ]\leq n^2d$,
$[M:L]\leq m^2$,
we obtain
\begin{equation}\label{discrelation-2}
|D_M|\leq m^{m^2n^2d}(N_L\fd_{M_1/L})^m|D_L|^{m^2}.
\end{equation}

We estimate $N_L\fd_{M_1/L}$.
Let $\fP$ be a prime ideal of $O_L$ not dividing a prime ideal from $S$
and not dividing $ma_0bD(f)$. Then by Lemma \ref{L_gen_ell},
\[
\ord_{\fP}(\gamma_1\gamma_2^{-1})\equiv\ord_{\fP}
\left(\frac{a_0(x-\alpha_1)}{a_0(x-\alpha_2)}\right)\equiv 0\, ({\rm mod}\, m),
\]
and so by Lemma \ref{L_gen_disc_primes_excluded}, $M_1/L$ is unramified at $\fP$.
Consequently, $\fd_{M_1/L}$ is composed of prime ideals from $U$,
where $U$ is the set of prime ideals of $O_L$ that divide the prime
ideals from $S$ or $ma_0bD(f)$.
Using Lemma \ref{L_gen_disc_primedivisors}, it follows that
\begin{eqnarray}\label{discrelation-3}
\fd_{M_1/L}&\supseteq&\prod_{\fP\in U}\fP^{m(1+\ord_{\fP}(u(m))}
\\
\nonumber
&\supseteq&\prod_{\fP\in U}\fP^m\prod_{\fP}\fP^{m\ord_{\fP}(u(m))}
\supseteq u(m)^m\prod_{\fP\in U}\fP^m.
\end{eqnarray}
First, by prime number theory, $u(m)\leq m^{\pi (m)}\leq 4^m$
(see Rosser and Schoenfeld \cite[Corollary 1]{RosserSchoenfeld}).
Hence $|N_{L/\Qq}(u(m)^m)|\leq 4^{m^2n^2d}$.
Second, by an argument similar to the proof of \eqref{bound-3},
defining $V$ to be the set of prime ideals of $O_L$ which are contained
in $S$ or divide $ma_0bD(f)$,
\begin{eqnarray*}
N_L(\prod_{\fP\in U}\fP)&\leq& N_K(\prod_{\fp\in V}\fp)^{[L:K]}
\leq N_K(\prod_{\fp\in V}\fp)^{n^2}
\\
&\leq& (Q_SN_S(ma_0bD(f))^{n^2}\leq (Q_Se^{d\cdot h(ma_0bD(f))})^{n^2}
\\
&\leq& Q_S^{n^2}m^{n^2d}e^{2n^3d(\log n+\widehat{h})}
\leq Q_S^{n^2}m^{n^2d}n^{2n^3d}e^{2n^3d\widehat{h}}
\end{eqnarray*}
where in the last estimate we have used Lemma \ref{disc-height}.
By combining this estimate and that for $|N_{L/\Qq}(u(m)^m)|$ with \eqref{discrelation-3},
we obtain
\begin{equation}\label{discrelation-4}
N_L\fd_{M_1/L}\leq 6^{m^2n^2d}n^{2mn^3d}Q_S^{mn^2}e^{2mn^3d\widehat{h}}.
\end{equation}
Finally, by inserting this estimate and the one arising from
Lemma \ref{L_disc_I},
\begin{equation}\label{disc_1b}
|D_L| \leq n^{4n^2d}\cdot e^{4n^2d \widehat{h}}\cdot |D_K|^{n^2}
\end{equation}
into \eqref{discrelation-2}, after some computations, we obtain
\eqref{discestimate-1}.

We now prove (ii). Let $m=2$. Take $L=K(\alpha_1,\alpha_2,\alpha_3)$,
$M_1=L(\sqrt{\gamma_1/\gamma_2})$, $M_2=L(\sqrt{\gamma_1/\gamma_3})$,
so that $M=M_1M_2$. Completely similarly to \eqref{discrelation-4},
but now using $[L:K]\leq n^3$ instead of $\leq n^2$, we get
\[
N_L\fd_{M_1/L}\leq 6^{4n^3d}n^{4n^4d}Q_S^{2n^3}e^{4n^4d\widehat{h}}.
\]
For $N_L\fd_{M_2/L}$ we have the same estimate.
So by Lemma \ref{L_gen_disc_extensions} (ii),
\[
N_L\fd_{M/L}\leq (N_L\fd_{M_1/L})^2(N_L\fd_{M_2/L})^2\leq
6^{16n^3d}n^{16n^4d}Q_S^{8n^3}e^{16n^4d\widehat{h}}.
\]
By inserting this inequality and the one arising from
Lemma \ref{L_disc_I},
\[
|D_L| \leq n^{6n^3d}\cdot e^{6n^3d \widehat{h}}\cdot |D_K|^{n^3}
\]
into $|D_M|=N_L\fd_{M/L}|D_L|^{[M:K]}$, after some computations
we obtain \eqref{discestimate-2}.
\end{proof}

\begin{proof}[Proof of Theorem \ref{T_super}]
Let $m\geq 3$ and let $x,y\in O_S$ be a solution to $by^m=f(x)$ with $y\not= 0$.
We have $x-\alpha_i=\gamma_i\xi_i^m$ ($i=1\kdots n)$ with the $\gamma_i,\xi_i$
as in Lemma \ref{L_gen_ell}.
Let $M:=K(\alpha_1,\alpha_2,\root\m\of{\gamma_1/\gamma_2},\rho )$, where $\rho$ is a primitive $m$-th root of unity, and let $T$ be the set of places of $M$
lying above the places from $S$.
Let $\fp_1\kdots\fp_t$ be the prime ideals (finite places) in $S$,
and $\fP_1\kdots\fP_{t'}$ the prime ideals in $T$. Then
$t'\leq [M:K]t\leq m^2n^2t$.
Further, let $P_T:=\max_{i=1}^{t'} N_M\fP_i$,
$Q_T:=\prod_{i=1}^{t'} N_M\fP_i$.

We clearly have
\begin{equation}\label{super-thue}
\gamma_1\xi_1^m-\gamma_2\xi_2^m=\alpha_2-\alpha_1,\ \ \xi_1,\xi_2\in O_T,
\end{equation}
and the left-hand side is a binary form of non-zero discriminant
which splits into linear factors over $M$.
By
Proposition \ref{P_Thue}, we have
\begin{eqnarray}\label{super-bound}
&&h(\xi_1)\leq c_1'm^6P_TR_T\Big(1+\frac{\log^* R_T}{\log^* P_T}\Big)\times
\\
&&
\nonumber
\qquad\qquad\qquad
\times
\Big( R_M+h_M\cdot d_M^{-1}\log Q_T+md_MA+B),
\end{eqnarray}
where $A=\max (h(\gamma_1),h(\gamma_2)$, $B=h(\alpha_1-\alpha_2)$,
$d_M=[M:\Qq ]$ and
$c_1'$ is the constant $c_1$ from Proposition \ref{P_Thue},
but with $s,d$ replaced by the upper bounds $m^2n^2s$, $m^2n^2d$
for the cardinality of $T$ and $[M:\Qq ]$, respectively,
and $R_T$ is the $T$-regulator.

Using $d\leq 2s$ we can estimate $c_1'$
by the larger but less complicated bound,
\begin{equation}\label{super-bound-hulp-1}
c_1'\leq 2^{50}(4m^2n^2s)^{7m^2n^2s}.
\end{equation}
Next, by \eqref{bound-3},
\begin{equation}\label{super-bound-hulp-2}
P_T\leq Q_T\leq Q_S^{[M:K]}\leq Q_S^{m^2n^2}.
\end{equation}
Let $C$ be the upper bound for $|D_M|$ from \eqref{discestimate-1}.
Thus, by Lemma \ref{L_hR} and \eqref{3.4},
\[
\max (h_M,R_M)\leq 5C(\log^* C)^{m^2n^2d-1}.
\]
Further, $A$ can be estimated from above by the bound from \eqref{split_to_powers}, and $B$ by
\[
h(\alpha_1)+h(\alpha_2)+\log 2\leq h(f)+(n+1)\log 2\leq \widehat{h}+(n+1)\log 2
\]
in view of Lemma \ref{L_bound_roots_by_pol}.
Together with \eqref{super-bound-hulp-2}, this implies
\begin{eqnarray}\label{super-bound-hulp-3}
&&R_M+h_M\cdot d_M^{-1}\log Q_T+md_MA+B
\\
\nonumber
&&\qquad
\leq 7C(\log^* C)^{m^2n^2d-1}\cdot d^{-1}\log Q_S\leq 7C(\log^* C)^{m^2n^2d}.
\end{eqnarray}
Next, by \eqref{3.4}, the inequality $d+t\leq 2s$,
and \eqref{super-bound-hulp-2},
we have
\begin{eqnarray*}
R_T&\leq& C^{1/2}(\log^* C)^{m^2n^2d-1}(\log^* P_T)^{t'}
\\
&\leq& C^{1/2}(\log^* C)^{m^2n^2d-1}(m^2n^2\log^* Q_S)^{m^2n^2t}
\\
&\leq& (m^2n^2)^{m^2n^2s}C^{1/2}(\log^* C)^{2m^2n^2s-1}
\end{eqnarray*}
and
\[
1+\frac{\log^* R_T}{\log^* P_T}\leq
4m^2n^2s\log^* C,
\]
hence
\begin{equation}\label{super-bound-hulp-4}
P_TR_T\Big(1+\frac{\log^* R_T}{\log^* P_T}\Big)
\leq (4m^2n^2)^{m^2n^2s}Q_S^{m^2n^2}C^{1/2}(\log^* C)^{2m^2n^2s}.
\end{equation}
Combining \eqref{super-bound-hulp-3}, \eqref{super-bound-hulp-4}
with \eqref{super-bound} gives
\begin{eqnarray*}
h(\xi_1)&\leq& 7m^6 c_1'(4m^2n^2)^{m^2n^2s}Q_S^{m^2n^2}C(\log^* C)^{4m^2n^2s}
\\
&\leq& 2^{50}(4m^2n^2s)^{13m^2n^2s}Q_S^{m^2n^2}C^2.
\end{eqnarray*}
Using
\[
h(x)\leq \log 2+h(\alpha_1)+h(\gamma_1)+mh(\xi_1),\ \ \
h(y)\leq m^{-1}(h(b)+h(f)+nh(x)),
\]
and the upper bound for $h(\gamma_1)$ from \eqref{split_to_powers},
we get
\begin{equation}\label{super-bound-hulp-5}
h(x),h(y)\leq 2^{51}mn(4m^2n^2s)^{13m^2n^2s}Q_S^{m^2n^2}C^2.
\end{equation}
Now substituting $C$, i.e., the upper bound for $|D_M|$ from
\eqref{discestimate-1}, and some algebra gives
the upper bound \eqref{2.1} from Theorem \ref{T_super}.
\end{proof}

\begin{proof}[Proof of Theorem \ref{T_hyper}]
Let $x,y\in O_S$ be a solution to $by^2=f(x)$ with $y\not= 0$.
We have $x-\alpha_i=\gamma_i\xi_i^m$ ($i=1\kdots n)$ with the $\gamma_i,\xi_i$
as in Lemma \ref{L_gen_ell}.
Let
\[
M:=K(\alpha_1,\alpha_2,\alpha_3,\sqrt{\gamma_1/\gamma_3},\sqrt{\gamma_2/\gamma_3}),
\]
and let $T$ be the set of places of $M$
lying above the places from $S$.
Notice that $[M:K]\leq 4n^3$.
Then
\begin{equation}\label{hyper-pell}
\gamma_1\xi_1^2-\gamma_2\xi_2^2=\alpha_2-\alpha_1,\ \ \
\gamma_1\xi_1^2-\gamma_3\xi_3^2=\alpha_3-\alpha_1,
\ \ \xi_1,\xi_2\in O_T.
\end{equation}
By applying Proposition \ref{P_Pell} to \eqref{hyper-pell},
and doing the same computations as above, we obtain
the same bound as in \eqref{super-bound-hulp-5},
but with $m=2$ and $m^2n^2$ replaced by $4n^3$, and with $C$ the upper bound
for $|D_M|$ from \eqref{discestimate-2}.
After some computation, we obtain the bound \eqref{2.2}
from Theorem \ref{T_hyper}.
\end{proof}

\section{Proof of Theorem \ref{T_ST}}

We assume that in some finite extension $G$ of $K$, the polynomial
$f$ factorizes as $a_0(X-\alpha_1)\cdots (X-\alpha_n)$.
For $i=1\kdots n$, let $L_i=\Qq (\alpha_i)$, let $d_{L_i},h_{L_i},R_{L_i}$ denote
the degree, class number and regulator of $L_i$,
and let $T_i$ be the set of places of $L_i$
lying above the places in $S$. Further, denote by $R_{T_i}$ the $T_i$-regulator
of $L_i$, and denote by $t_i$ the cardinality of $T_i$.
Let $Q_{T_i}:=\prod_{\fP\in T_i} N_{L_i}\fP$, where the product is over all
prime ideals in $T_i$.
The group of $T_i$-units $O_{T_i^*}$ is finitely generated and
by Lemma 2 of \cite{GyYu1} (see also \cite{BugGy1}, \cite{BugGy2} and \cite{Bug1}) we may choose a fundamental system of $T_i$-units, i.e., basis of $O_{T_i}^*$ modulo torsion
 $\eta_{i1}, \dots, \eta_{i,t_i-1}$ such that
\begin{equation}\label{Sunit-estimates}
\left\{\begin{array}{l}
\displaystyle{\prod_{j=1}^{t_i-1} h(\eta_{ij})\leq c_{1i}R_{T_i}},
\\
\displaystyle{\max_{1\leq j\leq t_i-1} h(\eta_{ij}) \leq c_{2i}R_{T_i}},
\end{array}\right.
\end{equation}
where
\[
c_{1i}= \frac{((t_i-1)!)^2}{2^{t_i-2}d_L^{t_i-1}},\ \
c_{2i}=29e\sqrt{t_i-2}d_{L_i}^{t_i-1}\log^*d_{L_i}c_{i1}.
\]
We estimate these upper bounds from above.
First noting $t_i\leq [L_i:K]s\leq ns$
we have the generous estimate
\begin{equation}\label{constants}
c_{i1},c_{i2}\leq 1200t_i^{2t_i}\leq 1200 (ns)^{2ns}.
\end{equation}
For the class number and regulator $h_{L_i}$, $R_{L_i}$,
we have similarly to \eqref{bound-2}:
\begin{eqnarray}\label{h_LR_L-bound}
\max (h_{L_i},R_{L_i},h_{L_i}R_{L_i})
&\leq& 5|D_{L_i}|^{1/2}(\log^* |D_{L_i}|)^{nd-1}
\\
\nonumber
&\leq& (n^3d)^{nd}e^{(2n-2)d\widehat{h}}|D_K|^n.
\end{eqnarray}
Further, from \eqref{3.4}, $d\leq 2s$,
we deduce
\begin{eqnarray}\label{bound-Tregulator}
R_{T_i}&\leq& (n^3d)^{nd}e^{(2n-2)d\widehat{h}}|D_K|^n(\log^* P_{T_i})^{ns-1}
\\
\nonumber
&\leq& (n^3d)^{nd}e^{(2n-2)d\widehat{h}}|D_K|^n(n\log^* P_S)^{ns-1}
\\
\nonumber
&\leq& (4n^7s^2)^{ns}e^{(2n-2)d\widehat{h}}|D_K|^n(\log^* P_S)^{ns-1}.
\end{eqnarray}
By inserting this and \eqref{constants} into \eqref{Sunit-estimates}, we obtain
\begin{eqnarray}
\label{fund_Sunit_prod}
&&\prod_{j=1}^{t_i-1} h(\eta_{ij})\leq
C_1:= 1200(4n^9s^4)^{ns}e^{2nd\widehat{h}}|D_K|^n(\log^* P_S)^{ns-1},
\\
\label{fund_Sunit}
&&\max_{1\leq j\leq t_i-1} h(\eta_{ij}) \leq C_1.
\end{eqnarray}

Now let $x,y$ and $m$ satisfy
\begin{equation}\label{Schinzel-Tijdeman}
by^m=f(x),\ \ m\in\Zz_{\geq 3},\, x,y\in O_S,\,
y\not=0,\, y\ \mbox{not a root of unity},
\end{equation}

\begin{lemma}\label{L_powersplit-2}
For $i=1,2$ there are $\gamma_i,\xi_i\in L_i^*$,
and integers $b_{i1}\cdots b_{i,t_i}$ of absolute value at most $m/2$, such that
\begin{equation}\label{equation-powersplit_2}
\left\{\begin{array}{l}
(x-\alpha_i)^{h_{L_1}h_{L_2}}=
\eta_{i1}^{b_{i1}}\cdots\eta_{i,t_i-1}^{b_{i,t_i-1}}\gamma_i\xi_i^m,
\\[0.2cm]
h(\gamma_i)\leq C_2:=(2n^3s)^{6ns}|D_K|^{2n}e^{4nd\widehat{h}}(\widehat{h}+\log^*P_S).
\end{array}\right.
\end{equation}
\end{lemma}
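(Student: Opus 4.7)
My plan is to normalise the factorisation $x-\alpha_i=\gamma_i^{(0)}(\xi_i^{(0)})^m$ from Lemma \ref{L_gen_ell}(ii) so that its $m$-dependent part is absorbed into the $\eta_{ij}^{b_{ij}}$ factors and into $\xi_i^m$, leaving behind an $m$-independent residue $\gamma_i$. Starting from the ideal decomposition $[a_0(x-\alpha_i)]_{T_i}=\fC_i\fA_i^m$ of Lemma \ref{L_gen_ell}(i), the crucial observation is that raising to the power $h:=h_{L_1}h_{L_2}$ forces both $\fC_i^h$ and $\fA_i^h$ to be principal in $O_{T_i}$, because $h$ is a multiple of the $T_i$-class number (which divides $h_{L_i}$). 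Choosing generators $\sigma_i,\tau_i\in L_i^*$, I obtain
\[
(a_0(x-\alpha_i))^h=u_i\sigma_i\tau_i^m \qquad (u_i\in O_{T_i}^*).
\]

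Next, using the fundamental system of $T_i$-units $\eta_{i1},\ldots,\eta_{i,t_i-1}$ with the estimates \eqref{fund_Sunit_prod}--\eqref{fund_Sunit}, I would write $u_i=\zeta_i\prod_j\eta_{ij}^{c_{ij}}$ with $\zeta_i$ a root of unity, and reduce each exponent $c_{ij}$ modulo $m$ as $c_{ij}=mq_{ij}+b_{ij}$ with $|b_{ij}|\leq m/2$. Absorbing the $m$-th powers $\eta_{ij}^{mq_{ij}}$ into the $m$-th-power factor yields
\[
(x-\alpha_i)^h=\bigl(a_0^{-h}\zeta_i\sigma_i\bigr)\prod_j\eta_{ij}^{b_{ij}}\Bigl(\tau_i\prod_j\eta_{ij}^{q_{ij}}\Bigr)^m ,
\]
so that putting $\gamma_i:=a_0^{-h}\zeta_i\sigma_i$ and $\xi_i:=\tau_i\prod_j\eta_{ij}^{q_{ij}}$ gives the required shape with $|b_{ij}|\le m/2$.

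The remaining task is the estimate $h(\gamma_i)\leq C_2$. Since the pair $(\gamma_i,\xi_i)$ is only determined modulo (i) $T_i$-units absorbed into $u_i$ and (ii) arbitrary $m$-th powers $\alpha^m\in L_i^{*m}$ absorbed into $\xi_i^m$, I am free to replace $\sigma_i$ by $\sigma_i\eta\alpha^m$ with $\eta\in O_{T_i}^*$, $\alpha\in L_i^*$. Exploiting the second freedom I reduce the principal ideal $(\sigma_i)=\fC_i^h$ modulo principal $m$-th powers: because $\fC_i$ is supported only on the primes dividing $a_0bD(f)$, whose number and sum of log-norms are controlled by $h(a_0bD(f))$ and thus by $n\widehat h+\log^*P_S$, a lattice argument in $\mathrm{Cl}(O_{T_i})/m\,\mathrm{Cl}(O_{T_i})$ produces an equivalent principal ideal whose $T_i$-norm is bounded independently of $m$. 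Applying Lemma \ref{L_bound_hight_by_norm} with exponent $1$ to the resulting generator controls its archimedean part in terms of $R_{L_i}$, $h_{L_i}$ and $\log Q_{T_i}$. Finally I would combine this with Lemma \ref{L_hR}, Lemma \ref{L_disc_I} and \eqref{bound-3} to express $R_{L_i},h_{L_i},|D_{L_i}|,\log Q_{T_i}$ in terms of $n,d,s,|D_K|,\widehat h,\log^*P_S$, and with $h(a_0)\leq\widehat h$ to derive the bound $C_2$. The hard part is precisely this $m$-th power reduction at the ideal-class level: the naive estimate $h(\sigma_i)\geq d_{L_i}^{-1}\log N_{T_i}(\fC_i^h)$ grows linearly in $m$ because $\fC_i$ has exponents up to $m-1$ at each relevant prime, so one must genuinely use the $L_i^{*m}$-ambiguity and not merely the $O_{T_i}^*$-ambiguity used in Lemma \ref{L_gen_ell}.
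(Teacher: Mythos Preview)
Your identification of the difficulty is exactly right, but the proposed fix does not work. Starting from Lemma~\ref{L_gen_ell}(i) you only know $\fC_i\supseteq [a_0bD(f)]^{m-1}$, so the exponents of $\fC_i^h$ at the bad primes $\fP_j$ can be as large as $h(m-1)\ord_{\fP_j}(a_0bD(f))$. Reducing $(\sigma_i)=\fC_i^h$ modulo principal $m$-th powers $[\alpha]^m$ can only bring each residual exponent into an interval of length $m$ (times the order of the class of $\fP_j$ in $\mathrm{Cl}(O_{T_i})$, if you insist that $[\alpha]$ be supported on the bad primes); it cannot produce exponents bounded independently of $m$. Your ``lattice argument in $\mathrm{Cl}(O_{T_i})/m\,\mathrm{Cl}(O_{T_i})$'' controls \emph{principality} modulo $m$-th powers, not \emph{norm}, so the resulting $N_{T_i}$-norm is still of order $\prod_j N\fP_j^{O(m)}$, and the final height bound for $\gamma_i$ retains a factor of $m$.

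The paper avoids this obstacle by not invoking Lemma~\ref{L_gen_ell}(i) at all. Instead it returns to the raw ideal relations
\[
\frac{[x-\alpha_i]}{[1,\alpha_i]}\cdot\frac{[g(x)]}{[g]}=[b][f]^{-1}[y]^m,\qquad
\frac{[x-\alpha_i]}{[1,\alpha_i]}+\frac{[g(x)]}{[g]}\supseteq\frac{[D(f)]}{[f]^{2n-2}}
\]
and observes that these force a decomposition $[x-\alpha_i]=\fC_1\fC_2^{-1}\fA^m$ with $\fC_1,\fC_2\supseteq [a_0bD(f)]$ (first power, not $(m-1)$-st). The point is that the $m$-th power $[y]^m$ is already sitting in the product relation, so at every bad prime the part of $\ord_{\fP}(x-\alpha_i)$ not accounted for by $[y]^m$ is bounded by $\ord_{\fP}(b)+\ord_{\fP}(D(f)/[f]^{2n-2})$ on one side and by $\ord_{\fP}([f])+\ord_{\fP}(D(f)/[f]^{2n-2})$ on the other --- both independent of $m$. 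Only after this does one raise to the power $r=h_{L_1}h_{L_2}$ to make $\fC_1^r,\fC_2^r,\fA^r$ principal, choose generators, and apply Lemma~\ref{L_bound_hight_by_norm} with exponent~$1$; the resulting bound $\frac{2r}{d_L}\log N_T(a_0bD(f))+2cR_L+\frac{2h_L}{d_L}\log Q_T$ is then visibly $m$-free. In short, the $m$-dependence must be removed at the level of the ideal decomposition, before passing to generators, and this requires more than the crude containment $\fC_i\supseteq [a_0bD(f)]^{m-1}$ that Lemma~\ref{L_gen_ell}(i) provides.
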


\begin{proof}
For convenience, we put $r:=h_{L_1}h_{L_2}$.
By symmetry, it suffices to prove the lemma for $i=1$.
For notational convenience, in the proof of this lemma only,
we suppress the index $i=1$
(so $L=L_1,T=T_1,t=t_1$, etc.).
We use the same notation as in the proof of Lemma \ref{L_gen_ell}.
Similar to \eqref{g-sum}, \eqref{g-product}, we have
\[
\frac{[x-\alpha]}{[1,\alpha]}+\frac{[g(x)]}{[g]}\supseteq \frac{[D(f)]}{[f]^{2n-2}},
\ \ \
[b][f]^{-1}[y]^m=\frac{[x-\alpha]}{[1,\alpha]}\cdot \frac{[g(x)]}{[g]},
\]
where $[\cdot ]$ denote fractional ideals with respect to $O_T$.
From these relations, it follows that there are integral ideals
$\fB_1,\fB_2$ of $O_T$ and a fractional ideal $\fA$ of $O_T$, such that
\[
\frac{[x-\alpha ]}{[1,\alpha ]}=\fB_1\fB_2^{-1}\fA^m,
\]
where
\[
\fB_1\supseteq [b]\cdot \frac{[D(f)]}{[f]^{2n-2}},\ \
\fB_2\supseteq [f]\cdot \frac{[D(f)]}{[f]^{2n-2}}.
\]
Since
\[
[a_0][1,\alpha ]\subseteq [a_0]\prod_{j=1}^n [1,\alpha_j]
\subseteq [f]\subseteq [1],
\]
it follows that $[1,\alpha ]^{-1}\supseteq [a_0]$.
Hence
\[
[x-\alpha ]=\fC_1\fC_2^{-1}\fA^m,
\]
where $\fC_1,\fC_2$ are ideals of $O_T$ such that
\[
\fC_1,\fC_2\supseteq [a_0bD(f)].
\]
Raising to the power $r$, we get
\begin{equation}\label{final-power-relation}
(x-\alpha )^{r} =\gamma_1\gamma_2^{-1}\lambda^m,
\end{equation}
for some non-zero $\gamma_1,\gamma_2\in O_T$ and $\lambda\in L^*$ with
\[
[\gamma_k]\supseteq [a_0bD(f)]^r\ \ \mbox{for } k=1,2.
\]
By Lemma \ref{L_bound_hight_by_norm}, there exist $\varepsilon_1,\varepsilon_2\in O_T^*$
such that for $k=1,2$,
\[
h(\varepsilon_k\gamma_k)\leq \frac{r}{d_L}\log N_T(a_0bD(f))
+cR_L+\frac{h_L}{d_L}\log Q_T,
\]
where $c\leq 39d_L^{d_L+2}\leq 39(2ns)^{2ns+2}$.
There are $\varepsilon\in O_T^*$, a root of unity $\zeta$ of $L$,
and integers $b_1\kdots b_{t-1}$ of absolute value at most $m/2$, such that
\[
\varepsilon_2\varepsilon_1^{-1}
=\zeta \varepsilon^m\eta_1^{b_1}\cdots\eta_{t-1}^{b_{t-1}}.
\]
Writing
\[
\gamma := \zeta^{-1}\frac{\varepsilon_1\gamma_1}{\varepsilon_2\gamma_2},\ \ \
\xi :=\varepsilon\lambda
\]
where $\eta_1\kdots \eta_{t-1}$ are the fundamental units of $O_T^*$ satisfying
\eqref{fund_Sunit_prod}, \eqref{fund_Sunit},
we get
\[
x-\alpha = \eta_1^{b_1}\cdots\eta_{t-1}^{b_{t-1}}\gamma\xi^m,
\]
where
\begin{equation}\label{gamma-bound}
h(\gamma )\leq \frac{2r}{d_L}\log N_T(a_0bD(f))
+2cR_L+2\frac{h_L}{d_L}\log Q_T.
\end{equation}
By \eqref{h_LR_L-bound}, $d\leq 2s$, \eqref{a0bD(f)}, \eqref{bound-3} we have
\begin{eqnarray*}
&&h_L,R_L\leq (2n^3s)^{2ns}e^{2nd\widehat{h}}|D_K|^n,\ \
r=h_{L_1}h_{L_2}\leq (2n^3s)^{4ns}e^{4nd\widehat{h}}|D_K|^{2n},
\\
&&d_L^{-1}\log N_T(a_0bD(f))\leq (2n-1)\log n +2n\widehat{h},
\\
&&d_L^{-1}\log Q_T\leq d^{-1}\log Q_S\leq s\log^* P_S.
\end{eqnarray*}
By inserting these bounds into \eqref{gamma-bound} and using $n\geq 2$,
after some algebra we obtain the upper bound $C_2$.
\end{proof}

\begin{proof}[Completion of the proof of Theorem \ref{T_ST}]
In what follows, let $L:=K(\alpha_1,\alpha_2)$, $d_L:=[L:\Qq ]$,
$T$ the set of places of
$L$ lying above the places from $S$, and
$t$ the cardinality of $T$. Let again $x,y\in O_S$ and $m$ an integer $\geq 3$
with $by^m=f(x)$, $y\not= 0$ and $y$ not a root of unity.
Put
\[
X:=\max_{i=1\kdots n} h(x-\alpha_i).
\]
Without loss of generality we assume
\begin{equation}\label{m-lower-bound}
m\geq (10n^2s)^{38ns}|D_K|^{6n}P_S^{n^2}e^{11nd\widehat{h}}.
\end{equation}
Then
\begin{eqnarray}\label{lower-bound-special-case}
&&X\geq \max (C_3, m(4d)^{-1}(\log 3d)^{-3}),
\\
\nonumber
&&\qquad
\mbox{with } C_3:=
(10n^2s)^{37ns}|D_K|^{6n}P_S^{n^2}e^{11nd\widehat{h}}.
\end{eqnarray}
Indeed,
by Lemma \ref{L_lower bound_for hight} we have
\[
m\leq \frac{n\cdot X+h(a_0)+h(b)}{h(y)}\leq  (2d(\log (3d))^3(nX+2\widehat{h}).
\]
If $X<C_3$ this contradicts \eqref{m-lower-bound}.
If $X\geq C_3$ the other lower bound for $X$ in the maximum easily follows.

We assume without loss of generality, that
\[
X=h(x-\alpha_2).
\]
If $|x-\al_2|_v\leq 1$ for $v\in T$, then using $x\in O_S$ we have
$$
\begin{aligned}
X&\leq \frac{1}{d_L}\log \left( \prod_{v \not\in T} \max(1,|x-\al_2|_v)\right)\\
&\leq \frac{1}{d_L}\log \left( \prod_{v \not\in T} \max(1,|\al_2|_v)\right) \leq h(\al_2) \leq \frac{\log^*(n+1)}{2}+h(f),
\end{aligned}
$$
which is impossible by \eqref{lower-bound-special-case}.
Hence $\max_{v\in T} |x-\alpha_2|_v>1$.
Choose $v_0 \in T$ such that
\begin{equation}\label{max_val}
|x-\al_2|_{v_0}=\max_{v\in T} |x-\al_2|_v.
\end{equation}
Then we have
$$
\begin{aligned}
X&\leq \frac{1}{d_L}\left( \log \left( |x-\al_2|^t_{v_0}\prod_{v \not\in T} \max(1,|x-\al_2|_v)\right) \right)\\
&\leq \frac{1}{d_L}\left( \log \left( |x-\al_2|^t_{v_0}\prod_{v \not\in T} \max(1,|\al_2|_v)\right) \right).
\end{aligned}
$$
which gives
$$
|x-\al_2|_{v_0} \geq
\frac{e^{Xd_L/t}}{\prod_{v \not\in T} \max(1,|\al_2|_v)^{1/t}}.
$$
Thus we have
\begin{equation}\label{bound_interm_11a}
\begin{aligned}
\left| 1- \frac{x-\al_1}{x-\al_2} \right|_{v_0} &=
\frac{|\al_2-\al_1|_{v_0}}{|x-\al_2|_{v_0}}
&\leq  \frac{|\al_2-\al_1|_{v_0}\prod_{v \not\in T} \max(1,|\al_2|_v)^{1/t}}{e^{Xd_L/t}}.
\end{aligned}
\end{equation}
Put $s(v_0)=1$ if $v_0$ is real, $s(v_0)=2$ if $v$ is complex,
and $s(v_0)=0$ if $v_0$ is finite.
Since by Lemma \ref{L_bound_roots_by_pol} we have
$$
\begin{aligned}
|\al_2-\al_1|_{v_0}&\prod_{v \not\in T} \max(1,|\al_2|_v)^{1/t}
\\
&\leq 2^{s(v_0)}\max(1,|\al_2|_{v_0})\max(1,|\al_1|_{v_0})\prod_{v \not\in T} \max(1,|\al_2|_v)
\\
&\leq 2^{s(v_0)}\exp(d_L(h(\al_1)+h(\al_2)))
\\
&\leq 2^{(n+1)s(v_0)}\exp((d_Lh(f)),
\end{aligned}
$$
\eqref{bound_interm_11a} gives us
\begin{equation}\label{bound_interm_11}
\left| 1- \frac{x-\al_1}{x-\al_2} \right|_{v_0}\leq
\exp\Big( (n+1)s(v_0)\log 2\, +d_Lh(f)-Xd_L/t\Big).
\end{equation}
Notice that by \eqref{lower-bound-special-case} we have
\begin{equation}\label{bound_interm_12}
\left| 1- \frac{x-\al_1}{x-\al_2} \right|_{v_0} <1.
\end{equation}
In general, we have for $y\in L$ with $|1-y|_{v_0}<1$ and any
positive integer $r$,
\[
|1-y^r|_{v_0}\leq 2^{r\cdot s(v_0)}|1-y|_{v_0}.
\]
Hence
\[
\left| 1- \left(\frac{x-\al_1}{x-\al_2}\right)^{h_{L_1}h_{L_2}} \right|_{v_0}
\leq
\exp\Big( (h_{L_1}h_{L_2}+n+1)s(v_0)\log 2\, +d_Lh(f)-Xd_L/t\Big).
\]
Using \eqref{lower-bound-special-case} and the estimates
\eqref{h_LR_L-bound}, $h(f)\leq\widehat{h}$, $d_L\leq nd$, $s\leq t\leq ns$,
this can be simplified to
\begin{equation}\label{bound_upper_1}
\left| 1- \left(\frac{x-\al_1}{x-\al_2}\right)^{h_{L_1}h_{L_2}} \right|_{v_0}
\leq \exp (-Xd_L/2t).
\end{equation}

On the other hand using Proposition \ref{P_Baker} and
Lemma \ref{L_powersplit-2}
we get a Baker type lower bound
\begin{equation}\label{bound_lower_1}
\begin{aligned}
&\left| 1- \left(\frac{x-\al_1}{x-\al_2}\right)^{h_{L_1}h_{L_2}} \right|_{v_0}
\\
&\qquad =
\left| 1-  \frac{\ga_1}{\ga_2}
\cdot \eta_{11}^{b_{11}}\cdots \eta_{1,t_1-1}^{b_{1,t_1-1}}
\cdot \eta_{21}^{-b_{21}}\cdots \eta_{2,t_2-1}^{-b_{2,t_2-1}}
\cdot
\left(\frac{\xi_1}{\xi_2}\right)^m \right|_{v_0}
\\
&\qquad\geq \exp\Big(-c_1(t_1+t_2,d_L)\cdot
\frac{N(v_0)}{\log N(v_0)}\Theta \log B\Big)
\end{aligned}
\end{equation}
where
$$
\begin{aligned}\label{bound_parameters_in_lower}
&\Theta:=\max (h(\xi_1/\xi_2),m(d))\cdot\max (h(\gamma_1/\gamma_2),m(d))\cdot
\prod_{j=1}^{t_1-1}h(\eta_{1j})\cdot
\prod_{j=1}^{t_2-1}h(\eta_{2j}),
\\
&B:=\max\{3,m, |b_{11}|\kdots |b_{1,t_1-1}|,\, |b_{21}|\kdots |b_{2,t_2-1}|),
\\
&N(v_0):=\begin{cases} 2 & \text{if $v_0$ is infinite}
\\
N_L\fP & \text{if $v_0=\fP$ is a prime ideal $\fP$},\end{cases}
\\
&c_1(t_1+t_2, d_L):=12(16ed_L)^{3t_1+3t_2+2} (\log^* d_L)^2.
\end{aligned}
$$

We estimate the above parameters.
First, by
\eqref{equation-powersplit_2}, we have $h(\gamma_i)\leq C_2$ for $i=1,2$.
Moreover, the exponents $b_{ij}$ in \eqref{equation-powersplit_2}
have absolute values at most $m/2$.
Together with \eqref{fund_Sunit}
and \eqref{lower-bound-special-case},
these imply
\begin{eqnarray}\label{xi-bound}
h(\xi_1/\xi_2) &\leq&\max h(\xi_1)+h(\xi_2)
\\
\nonumber
&\leq&
\frac{2}{m}(X+C_2)+\mbox{$\frac{1}{2}$}(t_1+t_2-2)C_1
\leq \frac{3}{m}\cdot X+2nsC_1
\\
\nonumber
&\leq&
(3+4d(\log 3d)^3\cdot 2nsC_1)\cdot\frac{X}{m}\leq 4^{ns+2}C_1\cdot\frac{X}{m},
\end{eqnarray}
where we have used $t_1,t_2\leq ns$, $d\leq 2s$, $n\geq 2$.
Further, using \eqref{fund_Sunit_prod} and
$h(\gamma_1/\gamma_2)\leq 2C_2$,
we get
\begin{equation}\label{Theta-bound}
\Theta \leq C_1^2\cdot 4^{ns+2}C_1\cdot\frac{X}{m}\cdot 2C_2\leq C_4\cdot\frac{X}{m},
\end{equation}
where
\[
C_4:= 2\times 10^{7} \big( 4^{10}n^{45}s^{18}\big)^{ns}|D_K|^{5n}
e^{10nd\widehat{h}}(\widehat{h}+1)(\log^* P_S)^{3ns-2}.
\]
Next, using $d_L\leq n(n-1)d\leq 2n(n-1)s$, $t_1,t_2\leq ns$, we have
\begin{equation}\label{c1-bound}
c_1(t_1+t_2, d_L)\leq C_5:=(32en^2s)^{6ns+3}.
\end{equation}
Finally, by \eqref{bound-3}, \eqref{m-lower-bound} we have
\[
N(v_0)\leq P_T\leq P_S^{[L:K]}\leq P_S^{n(n-1)}
\]
and $B=m$ since the exponents $b_{ij}$ in \eqref{equation-powersplit_2}
have absolute values at most $m/2$.
Inserting these and \eqref{Theta-bound}, \eqref{c1-bound} into \eqref{bound_lower_1},
we arrive at the lower bound
\[
\left| 1- \left(\frac{x-\al_1}{x-\al_2}\right)^{h_{L_1}h_{L_2}} \right|_{v_0}
\geq\exp\Big( -C_4C_5P_S^{n(n-1)}\frac{X}{m}\log m\Big).
\]
A comparison with the upper bound \eqref{bound_upper_1} gives
\[
\exp\Big( -C_4C_5P_S^{n(n-1)}\frac{X}{m}\log m\Big)\leq\exp(-d_LX/2t).
\]
By dividing out $X$ and inserting $t\leq n^2s$, $d\leq 2s$, we arrive at
\[
\begin{aligned}
\frac{m}{\log m}&\leq 2n^2sC_4C_5P_S^{n(n-1)}
\\
&< (10n^2s)^{35ns}|D_K|^{5n}e^{10nd\widehat{h}}(\widehat{h}+1)\cdot
P_S^{n(n-1)}(\log^* P_S)^{3ns-1}.
\end{aligned}
\]
Applying the inequalities $(\log X)^B\leq (B/2\epsilon )^BX^{\epsilon}$
for $X>1$, $B>0$, $\epsilon >0$ and $X+1\leq (e^{c-1}/c)e^{cX}$ for $X>0$, $c\geq 1$,
we arrive at our final estimate
\[
m< (10n^2s)^{40ns}|D_K|^{6n}P_S^{n^2}e^{11nd\widehat{h}}.
\]
This completes our proof of Theorem \ref{T_ST}.
\end{proof}

\end{document}